\pgfplotsset{compat=1.17}
\newtheorem{thm}{Theorem}[section]
\newtheorem{prop}[thm]{Proposition}
\newtheorem{ex}[thm]{Example}
\newtheorem{cor}[thm]{Corollary}
\theoremstyle{definition}
\newtheorem{definition}[thm]{Definition}
\newtheorem{obs}[thm]{Observation}
\newtheorem{prob}[thm]{Problem}
\DeclareMathOperator{\Rel}{\mathbb{R}el}
\DeclareMathOperator{\Set}{\mathbf{Set}}
\DeclareMathOperator{\Span}{\mathbb{S}pan}
\DeclareMathOperator{\Bord}{\mathbb{B}ord}
\DeclareMathOperator{\Mod}{\mathbb{M}od}
\DeclareMathOperator{\Eq}{\mathbb{E}q}
\DeclareMathOperator{\Prof}{\mathbb{P}rof}
\newcommand{\commMon}{\textbf{commMon}}
\newcommand{\stB}{B^*}
\newcommand{\decB}{(\stB,B)}
\newcommand{\dcat}{\textbf{dCat}}
\newcommand{\crossb}{B\rtimes_\Phi B^\ast}
\newcommand{\indFunct}{\Phi:\stB\to\commMon}
\title{Length of fully faithful framed bicategories}
\author{Juan Orendain}
\begin{document}

\maketitle

\begin{abstract}
\noindent The length of a double category is a numerical invariant measuring the 'work' it takes to reconstruct the double category from its globular data. The smallest possible length of a double category is 1. It is conjectured that framed bicategories are of length 1. In this paper we prove this conjecture for a particular class of framed bicategories, namely for those double categories for which all their unit squares are cartesian/opcartesian. We call these framed bicategories fully faithful/absolutely dense. 
\end{abstract}

\section{Introduction}

\noindent In the series of papers \cite{Orendain1,Orendain2,Orendain3,OrendainMaldonado}, the problem of lifting a bicategory $B$ to a double category $C$ along a given category of vertical arrows is considered. Constructing a square in a double category $C$, from the data of the horizontal bicategory $HC$ of $C$ and the category of vertical arrows $C_0$ of $C$, involves sequences of 2-dimensional compositions in different directions. These compositions are naturally organized into a double filtration of the category of squares of $C$. The length $\ell \varphi$ of a square $\varphi$ in the double category $C$, is a number in $\mathbb{N}$, measuring how long in this filtration one has to go to construct the square, and we thus consider it as a measure of how much 'work' it takes to construct the square $\varphi$ from globular data in $C$. The length $\ell C$ of a double category $C$ is the supremum of lengths of squares in $C$. We are concerned with the problem of computing the length of framed bicategories.

Bicategories are categories enriched over \textbf{Cat}, while double categories are categories internal to \textbf{Cat}. We thus think of the operation of lifting a bicategory $B$ to a double category $C$ along a category of vertical arrows $B^\ast$, as a way of internalizing $B$ along the category $B^\ast$. There are at least two situations in modern low dimensional category theory in which one wishes to internalize a bictegory $B$: When defining a symmetric monoidal structure on $B$, where the coherence data of the symmetric monoidal structure is more naturally expressed in terms of a given category of vertical arrows, see \cite{ShulmanFramed,ShulmanDerived}, think of the symmetric monoidal double category $\Mod$; and when equipping the category $B^\ast$ with proarrows, as in \cite{Wood1,Wood2}, think of the equipment $\Prof$. In both cases one is really interested in framed bicategories. There are more situations where this is desirable, e.g. \cite{KellyStreet} where internalizing provides a satisfactory expression of the mates correspondence, and \cite{BrownBook} where inernalizing provides a natural way of expressing and proving the HHSvK theorem. In all of these cases, the bicategory $B$, the category of vertical arrows $B^\ast$, and the double category $C$ are known. There are cases where $B$ and $B^\ast$ are known, but it is not clear that $C$ exists, see \cite{Bartels1,Orendain1} for the case of Hilbert bimodules, and normal morphisms of von Neumann algebras. There are other cases where more than one possible natural internalization of a pair $\decB$ can be constructed, see \cite[Section 3]{Orendain3} and \cite[Example 6.1]{OrendainMaldonado}. It is desirable to be able to decide if given a bicategory $B$ and a vertical category $B^\ast$, if the par $\decB$ can be internalized into a framed bicategory, if that is the case, to obtain information on what squares on a framed bicategory internalizing $\decB$ looks like, and ultimately, we would like to decide if there is a 'best' possible framed bicategory internalizing $\decB$. Knowing the length of a given framed bicategory is a step in this direction. The framed bicategories $\Mod,\Rel,\Prof,\Bord$ are known to be of length 1, and thus it is natural to conjecture that all framed bicategories are of length 1. In this paper we prove this conjecture for the case of framed bicategories for which every unit square is cartesian/opcartesian. We call these framed bicategories fully faithul/absolutely dense.  We moreover, provide an explicit description of the 'shape' of squares of both fully faithful and absolutely dense framed bicategories through an evaluation double functor from double categories of the form $\crossb$ defined in \cite{OrendainMaldonado}. Although our main motivation for this work is to compute the length of fully faithful/absolutely dense framed bicategories, we present our results in a more general setting. Our description of squares in fully faithful and absolutely dense framed bicategories and the computation of their lengths is done for double categories inducing $\pi_2$-indexings, which we introduce in Section \ref{s:inducingpi2index}. 

We briefly outline our strategy for proving our results. In \cite[Lemma 4.6]{Orendain1} it is proven that squares of length 1 in a double category admit a vertical subdivision of a specific form, the simplest of which is studied in \cite{OrendainMaldonado} and is called canonical form, see subsection \ref{ss:piindexings}. In \cite[Lemma 3.9]{OrendainMaldonado} it is proven that if every length 1 square in a double category $C$ is of canonical form, then $\ell C=1$. In order to prove that fully faithful/absolutely dense framed bicategories are of length 1 we thus prove that every length 1 square of a fully faithful/absolutely dense framed bicategory is of canonical form. To do this we leverage the notion of $\pi_2$-opindexing/$\pi_2$-indexing, defined in \cite{OrendainMaldonado}. A $\pi_2$-opindexing is a type of indexing, defined on a pair $\decB$ as above, implementing an abstract sliding operation of 2-cells in $B$ along morphisms in $B^\ast$. To every $\pi_2$-opindexing on $\decB$ a double category $\crossb$ is constructed, such that every square in $\crossb$ is of canonical form. In order to prove that every square in a fully faithful/absolutely dense framed bicategory $C$ is of canonical form, we first extract from $C$ a $\pi_2$-opindexing/$\pi_2$-indexing $\Phi$ on $H^\ast C=(C_0,HC)$, see Lemma \ref{lem:main}. In Theorem \ref{thm:initial} we prove that any double category for which a $\pi_2$-opindexing can be extracted in this way, and thus in particular any fully faithful/absolutely dense framed bicategory can be parametrized, in some sense, by a double category of the form $\crossb$, and thus all its length 1 squares are canonical and the double category is of length 1. In more categorical terms, we prove that double categories of the form $\crossb$ are initial in a category of double categories inducing the $\pi_2$-opindexing/$\pi_2$-indexing $\Phi$. The results in this note rely heavily on results appearing in the series \cite{Orendain1,Orendain2,Orendain3,OrendainMaldonado}. We will develop a pictorial language that we believe will make our arguments more transparent.

We now present the contents of this paper. In Section \ref{s:Prel} we briefly review the notions of internalization, globularly generated double category, length, $\pi_2$-indexing, and framed bicategory. In Section \ref{s:inducingpi2index} we introduce double categories inducing $\pi_2$-indexing/$\pi_2$-opindexings, and we prove that double categories of the form $\crossb$ are initial in a category of double categories inducing a $\pi_2$-indexing/$\pi_2$-opindexing $\Phi$, and that in particular, double categories inducing $\pi_2$-indexings/$\pi_2$-opindexings are of length 1. In Section \ref{s:fully faithful} we introduce fully faithful and absolutely dense framed bicategories, and present relevant examples. We prove that every framed bicategory contains a fully faithful/absolutely dense framed bicategory universally. In Section \ref{s:regframedidexngs} we present the main results of this note. We prove that every fully faithful/absolutely dense framed bicategory induces a $\pi_2$-opindexing/$\pi_2$-indexing. As a corollary of this we will prove that every fully faithful/absolutely dense framed bicategory is of length 1. We present examples of $\pi_2$-opindexings induced by fully faithful framed bicategories.

\

\noindent \textbf{Acknowledgements:} The author would like to thank Bob Par\'e, Nathanael Arkor, and Rub\'en Maldonado-Herrera for their helpful comments on the paper.
 
\section{Preliminaries}\label{s:Prel}

\subsection{Double categories}\label{ss:Doubelcats}

\noindent We will write $C_0,C_1$ for the category of objects and the category of morphisms of a double category $C$; $L,R$ for the left and right frame functors of $C$, and $U$ for the unit (horizontal identity) functor of $C$. Following \cite{ShulmanDerived} we will write $\boxminus,\boxvert$ for the vertical and horizontal composition operations of $C$ respectively. We will represent squares in double categories as diagrams of the form:
\begin{center}
    
\tikzset{every picture/.style={line width=0.75pt}} 

\begin{tikzpicture}[x=0.75pt,y=0.75pt,yscale=-1,xscale=1]

\draw   (240.73,120.3) -- (300.9,120.3) -- (300.9,180.47) -- (240.73,180.47) -- cycle ;

\draw (263.92,144.91) node [anchor=north west][inner sep=0.75pt]  [font=\scriptsize]  {$\varphi $};
\draw (225.07,145.2) node [anchor=north west][inner sep=0.75pt]  [font=\scriptsize]  {$f$};
\draw (306.64,144.91) node [anchor=north west][inner sep=0.75pt]  [font=\scriptsize]  {$g$};
\draw (265,105) node [anchor=north west][inner sep=0.75pt]  [font=\scriptsize]  {$\alpha $};
\draw (265.92,185.26) node [anchor=north west][inner sep=0.75pt]  [font=\scriptsize]  {$\beta $};
\draw (225.07,113.2) node [anchor=north west][inner sep=0.75pt]  [font=\scriptsize]  {$a$};
\draw (225.07,174.2) node [anchor=north west][inner sep=0.75pt]  [font=\scriptsize]  {$c$};
\draw (305.07,174.2) node [anchor=north west][inner sep=0.75pt]  [font=\scriptsize]  {$d$};
\draw (306.07,113.2) node [anchor=north west][inner sep=0.75pt]  [font=\scriptsize]  {$b$};

\end{tikzpicture}

\end{center}

\noindent where squares are read from left to right and from top to bottom, i.e. the edges $f,g$ are the left and right frame of the square $\varphi$, while the edges $\alpha,\beta$ are the vertical domain and codomain of $\varphi$ respectively. We represent (horizontal or vertical) identities as red edges. We represent horizontal identity squares as squares marked with the letter $U$:
\begin{center}

\tikzset{every picture/.style={line width=0.75pt}} 

\begin{tikzpicture}[x=0.75pt,y=0.75pt,yscale=-1,xscale=1]

\draw [color={rgb, 255:red, 208; green, 2; blue, 27 }  ,draw opacity=1 ]   (300.07,139.3) -- (360.23,139.3) ;
\draw    (300.07,139.3) -- (300.07,199.47) ;
\draw [color={rgb, 255:red, 208; green, 2; blue, 27 }  ,draw opacity=1 ]   (299.73,199.13) -- (359.9,199.13) ;
\draw [color={rgb, 255:red, 0; green, 0; blue, 0 }  ,draw opacity=1 ]   (359.9,138.97) -- (359.9,199.13) ;

\draw (284,161.07) node [anchor=north west][inner sep=0.75pt]  [font=\scriptsize]  {$f$};
\draw (364,161.73) node [anchor=north west][inner sep=0.75pt]  [font=\scriptsize]  {$f$};
\draw (324,166.83) node [anchor=north west][inner sep=0.75pt]  [font=\scriptsize]  {$U$};

\end{tikzpicture}

\end{center}
\noindent An important class of squares is the collection squares of the form
\begin{center}

\tikzset{every picture/.style={line width=0.75pt}} 

\begin{tikzpicture}[x=0.75pt,y=0.75pt,yscale=-1,xscale=1]

\draw [color={rgb, 255:red, 208; green, 2; blue, 27 }  ,draw opacity=1 ]   (320.07,159.3) -- (380.23,159.3) ;
\draw [color={rgb, 255:red, 208; green, 2; blue, 27 }  ,draw opacity=1 ]   (320.07,159.3) -- (320.07,219.47) ;
\draw [color={rgb, 255:red, 208; green, 2; blue, 27 }  ,draw opacity=1 ]   (319.73,219.13) -- (379.9,219.13) ;
\draw [color={rgb, 255:red, 208; green, 2; blue, 27 }  ,draw opacity=1 ]   (379.9,158.97) -- (379.9,219.13) ;

\end{tikzpicture}

\end{center}
\noindent We will write $\pi_2(C,a)$ for the set of all such squares with vertex $a$. By the Eckmann-Hilton argument, the horizontal composition and the vertical composition of such squares coincide and are commutative. The set $\pi_2(C,a)$ is thus a commutative monoid. Globular squares are represented by diagrams of the form
\begin{center}

\tikzset{every picture/.style={line width=0.75pt}} 

\begin{tikzpicture}[x=0.75pt,y=0.75pt,yscale=-1,xscale=1]

\draw [color={rgb, 255:red, 0; green, 0; blue, 0 }  ,draw opacity=1 ]   (300.07,119.3) -- (360.23,119.3) ;
\draw [color={rgb, 255:red, 208; green, 2; blue, 27 }  ,draw opacity=1 ]   (300.07,119.3) -- (300.07,179.47) ;
\draw [color={rgb, 255:red, 0; green, 0; blue, 0 }  ,draw opacity=1 ]   (299.73,179.13) -- (359.9,179.13) ;
\draw [color={rgb, 255:red, 208; green, 2; blue, 27 }  ,draw opacity=1 ]   (359.9,118.97) -- (359.9,179.13) ;

\end{tikzpicture}

\end{center}
\noindent The collection of globular squares of a double category $C$ forms a bicategory $HC$, the horizontal bicategory of $C$. Bicategories are precisely the double categories whose squares are all globular.

\subsection{Globularly generated double categories}\label{ss:ggdoublecats}
\noindent A decoration of a bicategory $B$ is a category $B^\ast$ having the same collection of objects as $B$. If $B^\ast$ is a decoration of $B$ we say that $\decB$ is a \textbf{decorated bicategory}. The pair $(C_0,HC)$ is a decorated bicategory for every double category $C$. We write $H^*C$ for this decorated bicategory, and we call it the \textbf{decorated horizontalization} of $C$. We are interested in the following problem:

\begin{prob}\label{prob:Internal}
Let $\decB$ be a decorated bicategory. Find double categories $C$ satisfying the equation $H^*C=\decB$.
\end{prob}

\noindent We understand problem \ref{prob:Internal} as the problem of lifting a bicategory $B$ to a double category through an orthogonal direction, provided by $\stB$. A solution to Problem \ref{prob:Internal} for a decorated bicategory $\decB$ will be called an \textbf{internalization} of $B$. Globularly generated double categories were introduced in \cite{Orendain1} as minimal solutions to Problem \ref{prob:Internal}. A double category $C$ is \textbf{globularly generated} if $C$ is generated, as a double category, by its collection of globular squares. Pictorially a double category $C$ is globularly generated if every square of $C$ can be written as vertical and horizontal compositions of squares of the form:
\begin{center}

\tikzset{every picture/.style={line width=0.75pt}} 

\begin{tikzpicture}[x=0.75pt,y=0.75pt,yscale=-1,xscale=1]

\draw [color={rgb, 255:red, 0; green, 0; blue, 0 }  ,draw opacity=1 ]   (230.07,140.3) -- (290.23,140.3) ;
\draw [color={rgb, 255:red, 208; green, 2; blue, 27 }  ,draw opacity=1 ]   (230.07,140.3) -- (230.07,200.47) ;
\draw [color={rgb, 255:red, 0; green, 0; blue, 0 }  ,draw opacity=1 ]   (229.73,200.13) -- (289.9,200.13) ;
\draw [color={rgb, 255:red, 208; green, 2; blue, 27 }  ,draw opacity=1 ]   (289.9,139.97) -- (289.9,200.13) ;
\draw [color={rgb, 255:red, 208; green, 2; blue, 27 }  ,draw opacity=1 ]   (330.73,139.97) -- (390.9,139.97) ;
\draw [color={rgb, 255:red, 0; green, 0; blue, 0 }  ,draw opacity=1 ]   (330.73,139.97) -- (330.73,200.13) ;
\draw [color={rgb, 255:red, 208; green, 2; blue, 27 }  ,draw opacity=1 ]   (330.4,199.8) -- (390.57,199.8) ;
\draw [color={rgb, 255:red, 0; green, 0; blue, 0 }  ,draw opacity=1 ]   (390.57,139.63) -- (390.57,199.8) ;

\draw (354,164.07) node [anchor=north west][inner sep=0.75pt]  [font=\footnotesize]  {$U$};

\end{tikzpicture}
\end{center}
\noindent Given a double category $C$ we write $\gamma C$ for the sub-double category of $C$ generated by squares of the above form. We call $\gamma C$ the \textbf{globularly generated piece} of $C$. $\gamma C$ is globularly generated, satisfies the equation 

\[H^*C=H^*\gamma C\]

\noindent and is contained in every sub-double category $D$ of $C$ satisfying the equation $H^*C=H^*D$. Moreover, a double category $C$ is globularly generated if and only if $C$ does not contain proper sub-double categories satisfying the above equation. Globularly generated double categories are thus minimal with respect to $H^*$. See \cite{Orendain3} for a functorial version of this statement. A consequence of the categorical version of this statement is that every double functor $F:C\to D$ from a globularly generated double category $C$ to a general double category $D$ factors uniquely through a double functor $C\to \gamma D$.

\subsection{Length 1}\label{ss:lengths}
\noindent Globularly generated double categories admit a helpful combinatorial description provided in the form of a filtration of their categories of squares. This defines a numerical invariant which we call the length. In this note we will only be interested in length 1 double categories. We refer the reader to \cite[Sec. 2.4]{OrendainMaldonado} for a general treatment. A globularly generated double category $C$ is of length 1, $\ell C=1$ in symbols, if every square in $C$ admits an expression as vertical composition of squares of the form

\begin{center}

\tikzset{every picture/.style={line width=0.75pt}} 

\begin{tikzpicture}[x=0.75pt,y=0.75pt,yscale=-1,xscale=1]

\draw [color={rgb, 255:red, 0; green, 0; blue, 0 }  ,draw opacity=1 ]   (230.07,140.3) -- (290.23,140.3) ;
\draw [color={rgb, 255:red, 208; green, 2; blue, 27 }  ,draw opacity=1 ]   (230.07,140.3) -- (230.07,200.47) ;
\draw [color={rgb, 255:red, 0; green, 0; blue, 0 }  ,draw opacity=1 ]   (229.73,200.13) -- (289.9,200.13) ;
\draw [color={rgb, 255:red, 208; green, 2; blue, 27 }  ,draw opacity=1 ]   (289.9,139.97) -- (289.9,200.13) ;
\draw [color={rgb, 255:red, 208; green, 2; blue, 27 }  ,draw opacity=1 ]   (330.73,139.97) -- (390.9,139.97) ;
\draw [color={rgb, 255:red, 0; green, 0; blue, 0 }  ,draw opacity=1 ]   (330.73,139.97) -- (330.73,200.13) ;
\draw [color={rgb, 255:red, 208; green, 2; blue, 27 }  ,draw opacity=1 ]   (330.4,199.8) -- (390.57,199.8) ;
\draw [color={rgb, 255:red, 0; green, 0; blue, 0 }  ,draw opacity=1 ]   (390.57,139.63) -- (390.57,199.8) ;

\draw (354,164.07) node [anchor=north west][inner sep=0.75pt]  [font=\footnotesize]  {$U$};

\end{tikzpicture}

\end{center}

\noindent i.e. if every square in $C$ is a vertical composite of horizontal identity and globular squares. More geometrically, if we regard the globular squares and horizontal identities of a double category $C$ as the simplest possible squares of $C$, then it is natural to represent globular and horizontal identity squares as squares marked by 0, i.e. as:

\begin{center}

\tikzset{every picture/.style={line width=0.75pt}} 

\begin{tikzpicture}[x=0.75pt,y=0.75pt,yscale=-1,xscale=1]

\draw   (292.77,151.6) -- (369.82,151.6) -- (369.82,228.65) -- (292.77,228.65) -- cycle ;

\draw (324.95,185.52) node [anchor=north west][inner sep=0.75pt]  [font=\scriptsize,xscale=0.9,yscale=0.9]  {$0$};

\end{tikzpicture}

\end{center}

\noindent A trivial, but important observation is that the collection of 0-marked squares is closed under horizontal composition. $C$ is of length 1 if every square in $C$ can be represented geometrically as:

\begin{center}

\tikzset{every picture/.style={line width=0.75pt}} 

\begin{tikzpicture}[x=0.75pt,y=0.75pt,yscale=-1,xscale=1]

\draw   (290.34,171.28) -- (369.16,171.28) -- (369.16,250.09) -- (290.34,250.09) -- cycle ;
\draw    (290.41,190.71) -- (369.3,190.41) ;
\draw    (290.64,210.71) -- (368.86,210.66) ;
\draw    (290.19,230.49) -- (369.3,230.41) ;

\draw (324.45,177.54) node [anchor=north west][inner sep=0.75pt]  [font=\scriptsize,xscale=0.9,yscale=0.9]  {$0$};
\draw (324.34,195.76) node [anchor=north west][inner sep=0.75pt]  [font=\scriptsize,xscale=0.9,yscale=0.9]  {$0$};
\draw (323.9,236.43) node [anchor=north west][inner sep=0.75pt]  [font=\scriptsize,xscale=0.9,yscale=0.9]  {$0$};
\draw (333.41,211.46) node [anchor=north west][inner sep=0.75pt]  [font=\scriptsize,rotate=-90.3,xscale=0.9,yscale=0.9]  {$\cdots $};

\end{tikzpicture}

\end{center}

\noindent In general, given two horizontally composable squares admitting subdivisions as above $\varphi,\psi$, it might be the case that for every choice of vertical subdivision of $\varphi$ and $\psi$ into marked 0 squares, the horizontal composition $\varphi\boxbar\psi$ looks like

\begin{center}

\tikzset{every picture/.style={line width=0.75pt}} 

\begin{tikzpicture}[x=0.75pt,y=0.75pt,yscale=-1,xscale=1]

\draw   (271.59,161) -- (350.41,161) -- (350.41,239.82) -- (271.59,239.82) -- cycle ;
\draw    (271.66,180.43) -- (350.55,180.14) ;
\draw    (271.89,197.58) -- (350.11,197.53) ;
\draw    (271.44,220.21) -- (350.55,220.14) ;
\draw   (351.09,161) -- (429.91,161) -- (429.91,239.82) -- (351.09,239.82) -- cycle ;
\draw    (351.16,190.15) -- (430.27,190.7) ;
\draw    (349.8,211.35) -- (429.85,211.23) ;

\draw (305.7,167.26) node [anchor=north west][inner sep=0.75pt]  [font=\scriptsize,xscale=0.9,yscale=0.9]  {$0$};
\draw (305.59,183.48) node [anchor=north west][inner sep=0.75pt]  [font=\scriptsize,xscale=0.9,yscale=0.9]  {$0$};
\draw (305.15,226.15) node [anchor=north west][inner sep=0.75pt]  [font=\scriptsize,xscale=0.9,yscale=0.9]  {$0$};
\draw (314.66,201.18) node [anchor=north west][inner sep=0.75pt]  [font=\scriptsize,rotate=-90.3,xscale=0.9,yscale=0.9]  {$\cdots $};
\draw (384.63,171.83) node [anchor=north west][inner sep=0.75pt]  [font=\scriptsize,xscale=0.9,yscale=0.9]  {$0$};
\draw (385.22,222.15) node [anchor=north west][inner sep=0.75pt]  [font=\scriptsize,xscale=0.9,yscale=0.9]  {$0$};
\draw (394.44,192.61) node [anchor=north west][inner sep=0.75pt]  [font=\scriptsize,rotate=-90.3,xscale=0.9,yscale=0.9]  {$\cdots $};

\end{tikzpicture}

\end{center}

\noindent See \cite[Example 4.1]{Orendain2} for an explicit example. $C$ is of length 1, precisely when the above does not happen, i.e. when given two horizontally compatible squares $\varphi,\psi$ admitting decompositions as vertical compositions of squares marked with 0, then we can always find vertical subdivisions making $\varphi\boxbar\psi$ look like:

\begin{center}

\tikzset{every picture/.style={line width=0.75pt}} 

\begin{tikzpicture}[x=0.75pt,y=0.75pt,yscale=-1,xscale=1]

\draw   (251.59,141) -- (330.41,141) -- (330.41,219.82) -- (251.59,219.82) -- cycle ;
\draw    (251.66,160.43) -- (330.55,160.14) ;
\draw    (251.89,180.43) -- (330.11,180.38) ;
\draw    (251.44,200.21) -- (330.55,200.14) ;
\draw   (331.09,141) -- (409.91,141) -- (409.91,219.82) -- (331.09,219.82) -- cycle ;
\draw    (331.16,160.43) -- (410.05,160.14) ;
\draw    (331.39,180.43) -- (409.61,180.38) ;
\draw    (330.94,200.21) -- (410.05,200.14) ;

\draw (285.7,147.26) node [anchor=north west][inner sep=0.75pt]  [font=\scriptsize,xscale=0.9,yscale=0.9]  {$0$};
\draw (285.59,165.48) node [anchor=north west][inner sep=0.75pt]  [font=\scriptsize,xscale=0.9,yscale=0.9]  {$0$};
\draw (285.15,206.15) node [anchor=north west][inner sep=0.75pt]  [font=\scriptsize,xscale=0.9,yscale=0.9]  {$0$};
\draw (294.66,181.18) node [anchor=north west][inner sep=0.75pt]  [font=\scriptsize,rotate=-90.3,xscale=0.9,yscale=0.9]  {$\cdots $};
\draw (365.2,147.26) node [anchor=north west][inner sep=0.75pt]  [font=\scriptsize,xscale=0.9,yscale=0.9]  {$0$};
\draw (365.09,165.48) node [anchor=north west][inner sep=0.75pt]  [font=\scriptsize,xscale=0.9,yscale=0.9]  {$0$};
\draw (364.65,206.15) node [anchor=north west][inner sep=0.75pt]  [font=\scriptsize,xscale=0.9,yscale=0.9]  {$0$};
\draw (374.16,181.18) node [anchor=north west][inner sep=0.75pt]  [font=\scriptsize,rotate=-90.3,xscale=0.9,yscale=0.9]  {$\cdots $};

\end{tikzpicture}

\end{center}

\noindent In that case we can use the exchange identity to re-arrange the above composition into a vertical subdivision of 0-marked squares. Squares in a globularly generated double category of length 1 thus admit a simple expression in terms of globular and unit squares. It is thus desirable to have criteria for knowing when a given double category is of length 1.

\subsection{$\pi_2$-opindexings}\label{ss:piindexings}

\begin{definition}\label{def:opindexing}
Let $(B^*,B)$ be a decorated 2-category. A \textbf{$\pi_2$-indexing} on $(B^*,B)$ is a functor
\[\indFunct\]
such that for every object $a$ in $\stB$, the monoid $\Phi(a)$ is the commutative monoid $\pi_2(B,a)$ of a, in $B$. A \textbf{$\pi_2$-opindexing} on $(B^*,B)$ is an indexing on $(B^{\ast op},B)$. 
\end{definition}

\noindent Examples of $\pi_2$-indexings and $\pi_2$-opindexings can be found in \cite{OrendainMaldonado}. Given a $\pi_2$-indexing $\Phi$ on a decorated bicategory $\decB$, for every morphsim $f:a\to b$ in $B^\ast$, and red boundary square $\varphi\in\pi_2(B,a)$, the $\pi_2$-indexing $\Phi$ functorially provides us with a new red boundary square $\Phi_f(\varphi)\in\pi_2(B,b)$. We interpret this as an operation of sliding the red bounday square $\varphi$ \textit{down} the morphism $f$. In the case in which $\decB=H^\ast C$ for a double category $C$, then we can identify $f$ with its unit square $U_f$ and in that case we can interpret $\Phi$ as a functorial operation of sliding red boundary squares down unit squares. Similarly, $\pi_2$-opindexings provide a functorial operation of sliding red boundary squares \textit{up} decorating morphisms/unit squares. In \cite{OrendainMaldonado} the authors focus on $\pi_2$-indexings. We will, in this note focus mostly on $\pi_2$-opindexings. Our main interest on $\pi_2$-opindexings comes from the following result, saying that we can associate, to every $\pi_2$-indexing or $\pi_2$-opindexing on a decorated bicategory $\decB$, a length 1 internalization of $\decB$.
\begin{thm}\label{thm:mainindexings}
    Let $\decB$ be a decorated 2-category. For every $\pi_2$-indexing or $\pi_2$-opindexing $\Phi$ on $\decB$, there exists an internalization $\crossb$ of $\decB$ such that $\ell(\crossb)=1$.
\end{thm}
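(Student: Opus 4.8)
The plan is to recall the explicit construction of $\crossb$ from \cite{OrendainMaldonado} and then to verify its two required properties separately: that it is an internalization of $\decB$, and that it has length $1$, the latter by reducing to the canonical-form criterion.

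First I would fix conventions so that the $\pi_2$-indexing and $\pi_2$-opindexing cases run in parallel; they differ only in the direction in which red-boundary squares are slid, and correspondingly in whether one forms the Grothendieck construction $\intGrot$ over $\stB$ or $\opintGrot$ over $B^{*op}$. I would take the object category of $\crossb$ to be $\stB$ itself, so that objects and vertical arrows are exactly the objects and morphisms of $\stB$, and build the category of squares out of $\Phi$: the globular squares are supplied by the $2$-cells of $B$ (via $B_1$), the unit squares by the morphisms of $\stB$, and a general square with frame $(f,g)$ and horizontal boundaries $\al,\be$ is assembled, through the Grothendieck construction, as a formal vertical composite of such generators. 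The (op)indexing $\Phi$ is precisely the datum prescribing how a red-boundary square in $\pi_2(B,a)$ is transported along a vertical morphism to a red-boundary square at its target, and this transport is what makes horizontal composition of squares compatible with vertical composition.

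Granting the construction, the internalization claim is immediate: by design the object category of $\crossb$ is $\stB$, while the globular squares of $\crossb$ together with their horizontal composition reproduce the bicategory $B$, so that $H^*\crossb=\decB$. For the length, the key observation---built into the definition---is that every square of $\crossb$ is presented as a vertical composite of unit and globular squares, i.e.\ is in canonical form. Since canonical squares are by definition vertical composites of unit and globular squares, $\crossb$ is globularly generated and each of its squares is a length $1$ square; by the definition of length, equivalently by \cite[Lemma 3.9]{OrendainMaldonado}, this yields $\ell(\crossb)=1$.

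The step I expect to carry the real weight is verifying that $\crossb$ is a genuine double category---in particular that horizontal composition of squares is well defined and associative and satisfies the interchange law with vertical composition. This is exactly where the hypotheses on $\Phi$ enter: functoriality of $\Phi$ (sliding respects composition and identities of vertical morphisms) and the identification $\Phi(a)=\pi_2(B,a)$ (so that, via Eckmann--Hilton, the monoid structure agrees with both horizontal and vertical composition of red-boundary squares). Equivalently, the obstacle is to show that the class of canonical-form squares is closed under horizontal composition, so that a horizontal composite of two canonical squares can be rearranged into a single canonical vertical composite; this closure, rather than the mere existence of a canonical decomposition of each individual square, is what underwrites the length-$1$ conclusion.
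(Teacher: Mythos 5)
Your proposal is correct and takes essentially the same route as the paper: like the paper, it recalls the construction of $\crossb$ from \cite{OrendainMaldonado} (the 2-cells of $B$ glued with triples $(\varphi_\downarrow,f,\varphi_\uparrow)$, with $\Phi$ entering to define vertical composition and make interchange work), observes that by design every nonglobular square is canonical and that canonical squares are closed under both compositions, and deduces both $H^\ast(\crossb)=\decB$ and $\ell(\crossb)=1$, deferring the verification of the double-category axioms to \cite{OrendainMaldonado} exactly as the paper does for Theorem \ref{thm:mainindexings}. One small slip to fix: nonglobular squares of $\crossb$ have \emph{equal} left and right frames---a triple $(\varphi_\downarrow,f,\varphi_\uparrow)$ has $f$ on both sides, as your own canonical-form claim forces, since the middle unit square $U_f$ has equal frames---so your ``general square with frame $(f,g)$'' should have frame $(f,f)$.
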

\noindent We briefly review the construction of the double category $\crossb$ from the data of a $\pi_2$-opindexing appearing in Theorem \ref{thm:mainindexings}. In \cite{OrendainMaldonado} this discussion is done explicitly for $\pi_2$-indexings. The idea behind the construction of $\crossb$ is to reverse engineer double categories where every length 1 square admits a vertical subdivision of the form:
\begin{center}

\tikzset{every picture/.style={line width=0.75pt}} 

\begin{tikzpicture}[x=0.75pt,y=0.75pt,yscale=-1,xscale=1]

\draw [color={rgb, 255:red, 0; green, 0; blue, 0 }  ,draw opacity=1 ]   (300,69.4) -- (350,69.4) ;
\draw [color={rgb, 255:red, 208; green, 2; blue, 27 }  ,draw opacity=1 ]   (300,119.4) -- (350,119.4) ;
\draw [color={rgb, 255:red, 208; green, 2; blue, 27 }  ,draw opacity=1 ]   (300,69.4) -- (300,119.4) ;
\draw [color={rgb, 255:red, 208; green, 2; blue, 27 }  ,draw opacity=1 ]   (350,69.4) -- (350,119.4) ;
\draw [color={rgb, 255:red, 0; green, 0; blue, 0 }  ,draw opacity=1 ]   (300,119.4) -- (300,169.4) ;
\draw [color={rgb, 255:red, 0; green, 0; blue, 0 }  ,draw opacity=1 ]   (350,119.4) -- (350,169.4) ;
\draw [color={rgb, 255:red, 208; green, 2; blue, 27 }  ,draw opacity=1 ]   (300,169.4) -- (350,169.4) ;
\draw [color={rgb, 255:red, 0; green, 0; blue, 0 }  ,draw opacity=1 ]   (300,219) -- (350,219) ;
\draw [color={rgb, 255:red, 208; green, 2; blue, 27 }  ,draw opacity=1 ]   (300,169) -- (300,219) ;
\draw [color={rgb, 255:red, 208; green, 2; blue, 27 }  ,draw opacity=1 ]   (350,169) -- (350,219) ;

\draw (354.36,135.84) node [anchor=north west][inner sep=0.75pt]  [font=\scriptsize]  {$f$};
\draw (287.36,134.84) node [anchor=north west][inner sep=0.75pt]  [font=\scriptsize]  {$f$};
\draw (321.01,54.3) node [anchor=north west][inner sep=0.75pt]  [font=\scriptsize]  {$\alpha $};
\draw (318.36,185.44) node [anchor=north west][inner sep=0.75pt]  [font=\scriptsize]  {$\varphi _{\downarrow }$};
\draw (318.21,137.5) node [anchor=north west][inner sep=0.75pt]  [font=\scriptsize]  {$U$};
\draw (320.21,222.6) node [anchor=north west][inner sep=0.75pt]  [font=\scriptsize]  {$\beta $};
\draw (317.96,84.64) node [anchor=north west][inner sep=0.75pt]  [font=\scriptsize]  {$\varphi _{\uparrow }$};

\end{tikzpicture}

\end{center}
\noindent A subdivision of a square $\varphi$ of the above form is called a \textbf{canonical decomposition} of $\varphi$, and any square in a double category admitting a canonical decomposition is called a \textbf{canonical square}. In \cite[Lemma 3.9]{OrendainMaldonado} it is proven that if every length 1 square in a double category $C$ is canonical, then $\ell C=1$.  Given a $\pi_2$-opindexing $\Phi$ on a decorated bicategory $\decB$, a square in $\crossb$ is either a 2-cell in $B$ or a triple $(\varphi_\downarrow,f,\varphi_\uparrow)$, where $f$ is a morphism in $B^\ast$ from an object $a$ to an object $b$, and where $\varphi_\downarrow,\varphi_\uparrow$ are 2-cells in $B$, $\varphi_\uparrow:\alpha\Rightarrow id_a$ and $\varphi_\downarrow:id_v\Rightarrow \beta$ for some 1-cells $\alpha:a\to a$ and $\beta:b\to b$ in $B$. We identify triples of the form $(\varphi_\downarrow,id,\varphi_\uparrow)$ with 2-cells $\varphi_\downarrow\boxminus \varphi_\uparrow$ in $B$. Given a morphism $f$ in $B^\ast$ the unit square $U_f$ in $\crossb$ is the square $(id,f,id)$. Moreover, we identify the 2-cells $\varphi_\downarrow$ and $\varphi_\uparrow$ with $(\varphi_\downarrow,id,id)$ and $(id,id,\varphi_\uparrow)$. Every nonglobular square $(\varphi_\downarrow,f,\varphi_\uparrow)$ in $\crossb$ can thus be identified with a canonical square $\varphi_\downarrow\boxminus U_f\boxminus \varphi_\uparrow$ and can thus be represented pictorially through a subdivision as above. We will generally use a different pictorial representation for squares in $\crossb$ in order to distinguish squares in a general double category $C$ with squares in double categories of the form $\crossb$. We will represent a nonglobular square $(\varphi_\downarrow,f,\varphi_\uparrow)$ in a double category of the form $\crossb$ as
\begin{center}

\tikzset{every picture/.style={line width=0.75pt}} 

\begin{tikzpicture}[x=0.75pt,y=0.75pt,yscale=-1,xscale=1]

\draw [color={rgb, 255:red, 0; green, 0; blue, 0 }  ,draw opacity=1 ] [dash pattern={on 0.84pt off 2.51pt}]  (280,49.4) -- (330,49.4) ;
\draw [color={rgb, 255:red, 208; green, 2; blue, 27 }  ,draw opacity=1 ] [dash pattern={on 0.84pt off 2.51pt}]  (280,99.4) -- (330,99.4) ;
\draw [color={rgb, 255:red, 208; green, 2; blue, 27 }  ,draw opacity=1 ] [dash pattern={on 0.84pt off 2.51pt}]  (280,49.4) -- (280,99.4) ;
\draw [color={rgb, 255:red, 208; green, 2; blue, 27 }  ,draw opacity=1 ] [dash pattern={on 0.84pt off 2.51pt}]  (330,49.4) -- (330,99.4) ;
\draw [color={rgb, 255:red, 0; green, 0; blue, 0 }  ,draw opacity=1 ] [dash pattern={on 0.84pt off 2.51pt}]  (280,99.4) -- (280,149.4) ;
\draw [color={rgb, 255:red, 0; green, 0; blue, 0 }  ,draw opacity=1 ] [dash pattern={on 0.84pt off 2.51pt}]  (330,99.4) -- (330,149.4) ;
\draw [color={rgb, 255:red, 208; green, 2; blue, 27 }  ,draw opacity=1 ] [dash pattern={on 0.84pt off 2.51pt}]  (280,149.4) -- (330,149.4) ;
\draw [color={rgb, 255:red, 0; green, 0; blue, 0 }  ,draw opacity=1 ] [dash pattern={on 0.84pt off 2.51pt}]  (280,199) -- (330,199) ;
\draw [color={rgb, 255:red, 208; green, 2; blue, 27 }  ,draw opacity=1 ] [dash pattern={on 0.84pt off 2.51pt}]  (280,149) -- (280,199) ;
\draw [color={rgb, 255:red, 208; green, 2; blue, 27 }  ,draw opacity=1 ] [dash pattern={on 0.84pt off 2.51pt}]  (330,149) -- (330,199) ;

\draw (334.36,115.84) node [anchor=north west][inner sep=0.75pt]  [font=\scriptsize]  {$f$};
\draw (267.36,114.84) node [anchor=north west][inner sep=0.75pt]  [font=\scriptsize]  {$f$};
\draw (301.01,34.3) node [anchor=north west][inner sep=0.75pt]  [font=\scriptsize]  {$\alpha $};
\draw (298.36,165.44) node [anchor=north west][inner sep=0.75pt]  [font=\scriptsize]  {$\varphi _{\downarrow }$};
\draw (298.21,117.5) node [anchor=north west][inner sep=0.75pt]  [font=\scriptsize]  {$U$};
\draw (300.21,202.6) node [anchor=north west][inner sep=0.75pt]  [font=\scriptsize]  {$\beta $};
\draw (297.96,64.64) node [anchor=north west][inner sep=0.75pt]  [font=\scriptsize]  {$\varphi _{\uparrow }$};

\end{tikzpicture}

\end{center}
i.e. we will distinguish squares in double categories of the form $\crossb$ from squares in general double categories by drawing squares in $\crossb$ with dotted lines. Nonglobular squares in $\crossb$ are thus by design all canonical, and thus $\ell(\crossb)=1$. The following observation follows directly from the construction of $\crossb$.

\begin{obs}\label{obs:squarescrosspsame}
    Let $\decB$ be a decorated bicategory. Let $\Phi$ be a $\pi_2$-opindexing. The only relation between squares in the double category $\crossb$ is the following: Two canonical squares
    \begin{center}

\tikzset{every picture/.style={line width=0.75pt}} 



\end{center}

\subsection{Framed bicategories}\label{ss:framed}
\noindent A \textbf{framed bicategory} is a double category such that the left-right frame functor $L\times R:C_1\to C_0^2$ is a bifibration \cite{ShulmanFramed,ShulmanDerived}. The condition of a double category $C$ being a framed bicategory provides extension and restriction of horizontal arrows along vertical arrows. Framed bicategories are essentially the same as proarrow equipments, as defined by Wood in \cite{Wood1,Wood2}, see \cite[Appendix C]{ShulmanFramed}. In a framed bicategory $C$, given a hollow niche
\begin{center}
\tikzset{every picture/.style={line width=0.75pt}} 

\begin{tikzpicture}[x=0.75pt,y=0.75pt,yscale=-1,xscale=1]

\draw [color={rgb, 255:red, 0; green, 0; blue, 0 }  ,draw opacity=1 ]   (241,190.6) -- (291,190.6) ;
\draw [color={rgb, 255:red, 0; green, 0; blue, 0 }  ,draw opacity=1 ]   (241,140.6) -- (241,190.6) ;
\draw [color={rgb, 255:red, 0; green, 0; blue, 0 }  ,draw opacity=1 ]   (291,140.6) -- (291,190.6) ;

\draw (221.6,158) node [anchor=north west][inner sep=0.75pt]  [font=\scriptsize]  {$f$};
\draw (297.6,158.2) node [anchor=north west][inner sep=0.75pt]  [font=\scriptsize]  {$g$};
\draw (260.8,192.8) node [anchor=north west][inner sep=0.75pt]  [font=\scriptsize]  {$\alpha $};

\end{tikzpicture}
\end{center}
\noindent there exists a Cartesian square
\begin{center}

\tikzset{every picture/.style={line width=0.75pt}} 

\begin{tikzpicture}[x=0.75pt,y=0.75pt,yscale=-1,xscale=1]

\draw [color={rgb, 255:red, 0; green, 0; blue, 0 }  ,draw opacity=1 ]   (261,210.6) -- (311,210.6) ;
\draw [color={rgb, 255:red, 0; green, 0; blue, 0 }  ,draw opacity=1 ]   (261,160.6) -- (261,210.6) ;
\draw [color={rgb, 255:red, 0; green, 0; blue, 0 }  ,draw opacity=1 ]   (311,160.6) -- (311,210.6) ;
\draw [color={rgb, 255:red, 0; green, 0; blue, 0 }  ,draw opacity=1 ]   (261,160.6) -- (311,160.6) ;

\draw (241.6,178) node [anchor=north west][inner sep=0.75pt]  [font=\scriptsize]  {$f$};
\draw (317.6,178.2) node [anchor=north west][inner sep=0.75pt]  [font=\scriptsize]  {$g$};
\draw (280.8,212.8) node [anchor=north west][inner sep=0.75pt]  [font=\scriptsize]  {$\alpha $};
\draw (271.47,142.4) node [anchor=north west][inner sep=0.75pt]  [font=\scriptsize]  {$f^{\ast } \alpha g^{\ast }$};
\draw (274.33,179.67) node [anchor=north west][inner sep=0.75pt]  [font=\scriptsize] [align=left] {Cart};

\end{tikzpicture}

\end{center}
\noindent filling the niche. A Cartesian square, as above, provides a way of changing basis for the horizontal morphism $\alpha$ along restriction with respect to the vertical morphisms $f$ and $g$. Given a Cartesian square as above, every other square of the form
\begin{center}

\tikzset{every picture/.style={line width=0.75pt}} 

\begin{tikzpicture}[x=0.75pt,y=0.75pt,yscale=-1,xscale=1]

\draw [color={rgb, 255:red, 0; green, 0; blue, 0 }  ,draw opacity=1 ]   (261,179.6) -- (311,179.6) ;
\draw [color={rgb, 255:red, 0; green, 0; blue, 0 }  ,draw opacity=1 ]   (261,129.6) -- (261,179.6) ;
\draw [color={rgb, 255:red, 0; green, 0; blue, 0 }  ,draw opacity=1 ]   (311,129.6) -- (311,179.6) ;
\draw [color={rgb, 255:red, 0; green, 0; blue, 0 }  ,draw opacity=1 ]   (261,129.6) -- (311,129.6) ;

\draw (241.6,147) node [anchor=north west][inner sep=0.75pt]  [font=\scriptsize]  {$fh$};
\draw (317.6,147.2) node [anchor=north west][inner sep=0.75pt]  [font=\scriptsize]  {$gk$};
\draw (280.8,181.8) node [anchor=north west][inner sep=0.75pt]  [font=\scriptsize]  {$\alpha $};
\draw (281.47,112.73) node [anchor=north west][inner sep=0.75pt]  [font=\scriptsize]  {$\beta $};
\draw (278.61,149.3) node [anchor=north west][inner sep=0.75pt]  [font=\scriptsize]  {$\Phi $};

\end{tikzpicture}

\end{center}
\noindent factors uniquely as
\begin{center}

\tikzset{every picture/.style={line width=0.75pt}} 

\begin{tikzpicture}[x=0.75pt,y=0.75pt,yscale=-1,xscale=1]

\draw [color={rgb, 255:red, 0; green, 0; blue, 0 }  ,draw opacity=1 ]   (281,199.6) -- (331,199.6) ;
\draw [color={rgb, 255:red, 0; green, 0; blue, 0 }  ,draw opacity=1 ]   (281,149.6) -- (281,199.6) ;
\draw [color={rgb, 255:red, 0; green, 0; blue, 0 }  ,draw opacity=1 ]   (331,149.6) -- (331,199.6) ;
\draw [color={rgb, 255:red, 0; green, 0; blue, 0 }  ,draw opacity=1 ]   (281,149.6) -- (331,149.6) ;
\draw [color={rgb, 255:red, 0; green, 0; blue, 0 }  ,draw opacity=1 ]   (281.02,99.65) -- (281.02,149.65) ;
\draw [color={rgb, 255:red, 0; green, 0; blue, 0 }  ,draw opacity=1 ]   (331.02,99.65) -- (331.02,149.65) ;
\draw [color={rgb, 255:red, 0; green, 0; blue, 0 }  ,draw opacity=1 ]   (281.02,99.65) -- (331.02,99.65) ;

\draw (265.14,167.31) node [anchor=north west][inner sep=0.75pt]  [font=\scriptsize]  {$f$};
\draw (337.6,167.2) node [anchor=north west][inner sep=0.75pt]  [font=\scriptsize]  {$g$};
\draw (300.8,201.8) node [anchor=north west][inner sep=0.75pt]  [font=\scriptsize]  {$\alpha $};
\draw (300.85,83.19) node [anchor=north west][inner sep=0.75pt]  [font=\scriptsize]  {$\beta $};
\draw (294.33,168.67) node [anchor=north west][inner sep=0.75pt]  [font=\scriptsize] [align=left] {Cart};
\draw (264.22,119.02) node [anchor=north west][inner sep=0.75pt]  [font=\scriptsize]  {$h$};
\draw (336.52,118.25) node [anchor=north west][inner sep=0.75pt]  [font=\scriptsize]  {$k$};
\draw (298.24,119.78) node [anchor=north west][inner sep=0.75pt]  [font=\scriptsize]  {$\Psi $};

\end{tikzpicture}

\end{center}
\noindent for some uniquely determined square $\Psi$. In particular, every square
\begin{center}

\tikzset{every picture/.style={line width=0.75pt}} 

\begin{tikzpicture}[x=0.75pt,y=0.75pt,yscale=-1,xscale=1]

\draw [color={rgb, 255:red, 0; green, 0; blue, 0 }  ,draw opacity=1 ]   (281,199.6) -- (331,199.6) ;
\draw [color={rgb, 255:red, 0; green, 0; blue, 0 }  ,draw opacity=1 ]   (281,149.6) -- (281,199.6) ;
\draw [color={rgb, 255:red, 0; green, 0; blue, 0 }  ,draw opacity=1 ]   (331,149.6) -- (331,199.6) ;
\draw [color={rgb, 255:red, 0; green, 0; blue, 0 }  ,draw opacity=1 ]   (281,149.6) -- (331,149.6) ;

\draw (265.6,167) node [anchor=north west][inner sep=0.75pt]  [font=\scriptsize]  {$f$};
\draw (336.6,167.2) node [anchor=north west][inner sep=0.75pt]  [font=\scriptsize]  {$g$};
\draw (300.8,201.8) node [anchor=north west][inner sep=0.75pt]  [font=\scriptsize]  {$\alpha $};
\draw (301.47,132.73) node [anchor=north west][inner sep=0.75pt]  [font=\scriptsize]  {$\beta $};
\draw (298.61,169.3) node [anchor=north west][inner sep=0.75pt]  [font=\scriptsize]  {$\Phi $};

\end{tikzpicture}

\end{center}
\noindent Factors uniquely as
\begin{center}

\tikzset{every picture/.style={line width=0.75pt}} 

\begin{tikzpicture}[x=0.75pt,y=0.75pt,yscale=-1,xscale=1]

\draw [color={rgb, 255:red, 0; green, 0; blue, 0 }  ,draw opacity=1 ]   (301,219.2) -- (351,219.2) ;
\draw [color={rgb, 255:red, 0; green, 0; blue, 0 }  ,draw opacity=1 ]   (301,169.2) -- (301,219.2) ;
\draw [color={rgb, 255:red, 0; green, 0; blue, 0 }  ,draw opacity=1 ]   (351,169.2) -- (351,219.2) ;
\draw [color={rgb, 255:red, 0; green, 0; blue, 0 }  ,draw opacity=1 ]   (301,169.2) -- (351,169.2) ;
\draw [color={rgb, 255:red, 208; green, 2; blue, 27 }  ,draw opacity=1 ]   (301.02,119.25) -- (301.02,169.25) ;
\draw [color={rgb, 255:red, 208; green, 2; blue, 27 }  ,draw opacity=1 ]   (351.02,119.25) -- (351.02,169.25) ;
\draw [color={rgb, 255:red, 0; green, 0; blue, 0 }  ,draw opacity=1 ]   (301.02,119.25) -- (351.02,119.25) ;

\draw (285.14,186.91) node [anchor=north west][inner sep=0.75pt]  [font=\scriptsize]  {$f$};
\draw (357.6,186.8) node [anchor=north west][inner sep=0.75pt]  [font=\scriptsize]  {$g$};
\draw (320.8,221.4) node [anchor=north west][inner sep=0.75pt]  [font=\scriptsize]  {$\alpha $};
\draw (320.85,102.79) node [anchor=north west][inner sep=0.75pt]  [font=\scriptsize]  {$\beta $};
\draw (314.33,188.27) node [anchor=north west][inner sep=0.75pt]  [font=\scriptsize] [align=left] {Cart};
\draw (318.24,139.38) node [anchor=north west][inner sep=0.75pt]  [font=\scriptsize]  {$\Psi $};

\end{tikzpicture}

\end{center}
\noindent A choice of cartesian squares for niches as above is a \textbf{cleavage} for $L\times R$. In case a cleavage for $L\times R$ has been chosen, we will say that $L\times R$ is a cloven fibration. A cloven fibration is \textbf{normal} if the square
\begin{center}

\tikzset{every picture/.style={line width=0.75pt}} 

\begin{tikzpicture}[x=0.75pt,y=0.75pt,yscale=-1,xscale=1]

\draw [color={rgb, 255:red, 0; green, 0; blue, 0 }  ,draw opacity=1 ]   (180,180.6) -- (230,180.6) ;
\draw [color={rgb, 255:red, 0; green, 0; blue, 0 }  ,draw opacity=1 ]   (180,230.6) -- (230,230.6) ;
\draw [color={rgb, 255:red, 208; green, 2; blue, 27 }  ,draw opacity=1 ]   (180,180.6) -- (180,230.6) ;
\draw [color={rgb, 255:red, 208; green, 2; blue, 27 }  ,draw opacity=1 ]   (230,180.6) -- (230,230.6) ;

\draw (189.69,165.71) node [anchor=north west][inner sep=0.75pt]  [font=\scriptsize]  {$1^{*} \alpha 1^{\ast }$};
\draw (201.36,232.71) node [anchor=north west][inner sep=0.75pt]  [font=\scriptsize]  {$\alpha $};
\draw (192.67,200.33) node [anchor=north west][inner sep=0.75pt]  [font=\scriptsize] [align=left] {Cart};

\end{tikzpicture}

\end{center}
\noindent is the horizontal identity square
\begin{center}

\tikzset{every picture/.style={line width=0.75pt}} 

\begin{tikzpicture}[x=0.75pt,y=0.75pt,yscale=-1,xscale=1]

\draw [color={rgb, 255:red, 0; green, 0; blue, 0 }  ,draw opacity=1 ]   (160,160.6) -- (210,160.6) ;
\draw [color={rgb, 255:red, 0; green, 0; blue, 0 }  ,draw opacity=1 ]   (160,210.6) -- (210,210.6) ;
\draw [color={rgb, 255:red, 208; green, 2; blue, 27 }  ,draw opacity=1 ]   (160,160.6) -- (160,210.6) ;
\draw [color={rgb, 255:red, 208; green, 2; blue, 27 }  ,draw opacity=1 ]   (210,160.6) -- (210,210.6) ;

\draw (178.69,145.71) node [anchor=north west][inner sep=0.75pt]  [font=\scriptsize]  {$\alpha $};
\draw (179.36,212.71) node [anchor=north west][inner sep=0.75pt]  [font=\scriptsize]  {$\alpha $};
\draw (177.36,180.37) node [anchor=north west][inner sep=0.75pt]  [font=\scriptsize]  {$U$};

\end{tikzpicture}

\end{center}
\noindent and is \textbf{split} if the Cartesian square
\begin{center}

\tikzset{every picture/.style={line width=0.75pt}} 

\begin{tikzpicture}[x=0.75pt,y=0.75pt,yscale=-1,xscale=1]

\draw [color={rgb, 255:red, 0; green, 0; blue, 0 }  ,draw opacity=1 ]   (200,200.6) -- (250,200.6) ;
\draw [color={rgb, 255:red, 0; green, 0; blue, 0 }  ,draw opacity=1 ]   (200,250.6) -- (250,250.6) ;
\draw [color={rgb, 255:red, 0; green, 0; blue, 0 }  ,draw opacity=1 ]   (200,200.6) -- (200,250.6) ;
\draw [color={rgb, 255:red, 0; green, 0; blue, 0 }  ,draw opacity=1 ]   (250,200.6) -- (250,250.6) ;

\draw (196.69,182.71) node [anchor=north west][inner sep=0.75pt]  [font=\scriptsize]  {$( fh)^{*} \alpha ( gk)^{\ast }$};
\draw (221.36,252.71) node [anchor=north west][inner sep=0.75pt]  [font=\scriptsize]  {$\alpha $};
\draw (212.67,220.33) node [anchor=north west][inner sep=0.75pt]  [font=\scriptsize] [align=left] {Cart};
\draw (255.36,218.4) node [anchor=north west][inner sep=0.75pt]  [font=\scriptsize]  {$gk$};
\draw (182.76,218) node [anchor=north west][inner sep=0.75pt]  [font=\scriptsize]  {$fh$};

\end{tikzpicture}

\end{center}
\noindent is the composite
\begin{center}

\tikzset{every picture/.style={line width=0.75pt}} 

\begin{tikzpicture}[x=0.75pt,y=0.75pt,yscale=-1,xscale=1]

\draw [color={rgb, 255:red, 0; green, 0; blue, 0 }  ,draw opacity=1 ]   (301,219.6) -- (351,219.6) ;
\draw [color={rgb, 255:red, 0; green, 0; blue, 0 }  ,draw opacity=1 ]   (301,169.6) -- (301,219.6) ;
\draw [color={rgb, 255:red, 0; green, 0; blue, 0 }  ,draw opacity=1 ]   (351,169.6) -- (351,219.6) ;
\draw [color={rgb, 255:red, 0; green, 0; blue, 0 }  ,draw opacity=1 ]   (301,169.6) -- (351,169.6) ;
\draw [color={rgb, 255:red, 0; green, 0; blue, 0 }  ,draw opacity=1 ]   (301.02,119.65) -- (301.02,169.65) ;
\draw [color={rgb, 255:red, 0; green, 0; blue, 0 }  ,draw opacity=1 ]   (351.02,119.65) -- (351.02,169.65) ;
\draw [color={rgb, 255:red, 0; green, 0; blue, 0 }  ,draw opacity=1 ]   (301.02,119.65) -- (351.02,119.65) ;

\draw (285.14,187.31) node [anchor=north west][inner sep=0.75pt]  [font=\scriptsize]  {$f$};
\draw (357.6,187.2) node [anchor=north west][inner sep=0.75pt]  [font=\scriptsize]  {$g$};
\draw (320.8,221.8) node [anchor=north west][inner sep=0.75pt]  [font=\scriptsize]  {$\alpha $};
\draw (314.33,188.67) node [anchor=north west][inner sep=0.75pt]  [font=\scriptsize] [align=left] {Cart};
\draw (284.22,139.02) node [anchor=north west][inner sep=0.75pt]  [font=\scriptsize]  {$h$};
\draw (356.52,138.25) node [anchor=north west][inner sep=0.75pt]  [font=\scriptsize]  {$k$};
\draw (314.33,139.33) node [anchor=north west][inner sep=0.75pt]  [font=\scriptsize] [align=left] {Cart};
\draw (298,100.11) node [anchor=north west][inner sep=0.75pt]  [font=\scriptsize]  {$( fh)^{*} \alpha ( gk)^{\ast }$};

\end{tikzpicture}

\end{center}

\noindent We will assume, throughout the paper, that framed bicategories $C$ are such that \textbf{$L\times R$ is a normal, cloven fibration}.

\section{Double categories inducing $\pi_2$-idexings}\label{s:inducingpi2index}
\noindent In this section we prove that the construction of the length 1 internalization $\crossb$ associated to a $\pi_2$-indexing/$\pi_2$-opindexing $\Phi$ is universal in the category of double categories inducing $\Phi$. We will do this for $\pi_2$-opindexings. The arguments for the $\pi_2$-indexing case are completely analogous. The main argument is based on the fact that $\pi_2$-opindexings really are structures associated to internalizations of decorated bicategories. Let $C$ be a double category. Suppose $C$ admits an operation of sliding red boundary squares up unit squares, in the sense that for every vertical composition of the form
\begin{center}

\tikzset{every picture/.style={line width=0.75pt}} 

\begin{tikzpicture}[x=0.75pt,y=0.75pt,yscale=-1,xscale=1]

\draw [color={rgb, 255:red, 208; green, 2; blue, 27 }  ,draw opacity=1 ]   (180,181.6) -- (230,181.6) ;
\draw [color={rgb, 255:red, 208; green, 2; blue, 27 }  ,draw opacity=1 ]   (180,231.6) -- (230,231.6) ;
\draw    (180,181.6) -- (180,231.6) ;
\draw    (230,181.6) -- (230,231.6) ;
\draw [color={rgb, 255:red, 208; green, 2; blue, 27 }  ,draw opacity=1 ]   (180,281.6) -- (230,281.6) ;
\draw [color={rgb, 255:red, 208; green, 2; blue, 27 }  ,draw opacity=1 ]   (180,231.6) -- (180,281.6) ;
\draw [color={rgb, 255:red, 208; green, 2; blue, 27 }  ,draw opacity=1 ]   (230,231.6) -- (230,281.6) ;

\draw (232.36,200.04) node [anchor=north west][inner sep=0.75pt]  [font=\scriptsize]  {$f$};
\draw (169.36,200.04) node [anchor=north west][inner sep=0.75pt]  [font=\scriptsize]  {$f$};
\draw (200.61,248.9) node [anchor=north west][inner sep=0.75pt]  [font=\scriptsize]  {$\varphi $};
\draw (197.61,202.9) node [anchor=north west][inner sep=0.75pt]  [font=\scriptsize]  {$U$};

\end{tikzpicture}

\end{center}
there exists a red boundary square
\begin{center}

\tikzset{every picture/.style={line width=0.75pt}} 

\begin{tikzpicture}[x=0.75pt,y=0.75pt,yscale=-1,xscale=1]

\draw [color={rgb, 255:red, 208; green, 2; blue, 27 }  ,draw opacity=1 ]   (290,89.6) -- (340,89.6) ;
\draw [color={rgb, 255:red, 208; green, 2; blue, 27 }  ,draw opacity=1 ]   (290,139.6) -- (340,139.6) ;
\draw [color={rgb, 255:red, 208; green, 2; blue, 27 }  ,draw opacity=1 ]   (290,89.6) -- (290,139.6) ;
\draw [color={rgb, 255:red, 208; green, 2; blue, 27 }  ,draw opacity=1 ]   (340,89.6) -- (340,139.6) ;

\draw (304.61,107.9) node [anchor=north west][inner sep=0.75pt]  [font=\scriptsize]  {$f^{\ast } \varphi $};

\end{tikzpicture}

\end{center}
fitting in an equation of the form
\begin{center}

\tikzset{every picture/.style={line width=0.75pt}} 

\begin{tikzpicture}[x=0.75pt,y=0.75pt,yscale=-1,xscale=1]

\draw [color={rgb, 255:red, 208; green, 2; blue, 27 }  ,draw opacity=1 ]   (270,69.6) -- (320,69.6) ;
\draw [color={rgb, 255:red, 208; green, 2; blue, 27 }  ,draw opacity=1 ]   (270,119.6) -- (320,119.6) ;
\draw [color={rgb, 255:red, 208; green, 2; blue, 27 }  ,draw opacity=1 ]   (270,69.6) -- (270,119.6) ;
\draw [color={rgb, 255:red, 208; green, 2; blue, 27 }  ,draw opacity=1 ]   (320,69.6) -- (320,119.6) ;
\draw [color={rgb, 255:red, 208; green, 2; blue, 27 }  ,draw opacity=1 ]   (270,169.6) -- (320,169.6) ;
\draw [color={rgb, 255:red, 0; green, 0; blue, 0 }  ,draw opacity=1 ]   (270,119.6) -- (270,169.6) ;
\draw [color={rgb, 255:red, 0; green, 0; blue, 0 }  ,draw opacity=1 ]   (320,119.6) -- (320,169.6) ;
\draw [color={rgb, 255:red, 208; green, 2; blue, 27 }  ,draw opacity=1 ]   (171,69) -- (221,69) ;
\draw [color={rgb, 255:red, 208; green, 2; blue, 27 }  ,draw opacity=1 ]   (171,119) -- (221,119) ;
\draw    (171,69) -- (171,119) ;
\draw    (221,69) -- (221,119) ;
\draw [color={rgb, 255:red, 208; green, 2; blue, 27 }  ,draw opacity=1 ]   (171,169) -- (221,169) ;
\draw [color={rgb, 255:red, 208; green, 2; blue, 27 }  ,draw opacity=1 ]   (171,119) -- (171,169) ;
\draw [color={rgb, 255:red, 208; green, 2; blue, 27 }  ,draw opacity=1 ]   (221,119) -- (221,169) ;

\draw (322.36,137.04) node [anchor=north west][inner sep=0.75pt]  [font=\scriptsize]  {$f$};
\draw (259.36,136.04) node [anchor=north west][inner sep=0.75pt]  [font=\scriptsize]  {$f$};
\draw (288.61,136.9) node [anchor=north west][inner sep=0.75pt]  [font=\scriptsize]  {$U$};
\draw (284.61,87.9) node [anchor=north west][inner sep=0.75pt]  [font=\scriptsize]  {$f^{\ast } \varphi $};
\draw (223.36,87.44) node [anchor=north west][inner sep=0.75pt]  [font=\scriptsize]  {$f$};
\draw (160.36,87.44) node [anchor=north west][inner sep=0.75pt]  [font=\scriptsize]  {$f$};
\draw (191.61,136.3) node [anchor=north west][inner sep=0.75pt]  [font=\scriptsize]  {$\varphi $};
\draw (188.61,90.3) node [anchor=north west][inner sep=0.75pt]  [font=\scriptsize]  {$U$};
\draw (236.56,113.84) node [anchor=north west][inner sep=0.75pt]  [font=\scriptsize]  {$=$};

\end{tikzpicture}

\end{center}
Moreover, assume that the above squares can be chosen in such a way that for every composite vertical morphism $gf$, the chosen square $(gf)^\ast\varphi$ is the sliding $g^\ast f^\ast\varphi$ of the square $f^\ast \varphi$ along $g^\ast$. Geometrically
\begin{center}

\tikzset{every picture/.style={line width=0.75pt}} 

\begin{tikzpicture}[x=0.75pt,y=0.75pt,yscale=-1,xscale=1]

\draw [color={rgb, 255:red, 208; green, 2; blue, 27 }  ,draw opacity=1 ]   (290,89.6) -- (340,89.6) ;
\draw [color={rgb, 255:red, 208; green, 2; blue, 27 }  ,draw opacity=1 ]   (290,139.6) -- (340,139.6) ;
\draw [color={rgb, 255:red, 208; green, 2; blue, 27 }  ,draw opacity=1 ]   (290,89.6) -- (290,139.6) ;
\draw [color={rgb, 255:red, 208; green, 2; blue, 27 }  ,draw opacity=1 ]   (340,89.6) -- (340,139.6) ;
\draw [color={rgb, 255:red, 208; green, 2; blue, 27 }  ,draw opacity=1 ]   (290,189.6) -- (340,189.6) ;
\draw [color={rgb, 255:red, 0; green, 0; blue, 0 }  ,draw opacity=1 ]   (290,139.6) -- (290,189.6) ;
\draw [color={rgb, 255:red, 0; green, 0; blue, 0 }  ,draw opacity=1 ]   (340,139.6) -- (340,189.6) ;
\draw [color={rgb, 255:red, 208; green, 2; blue, 27 }  ,draw opacity=1 ]   (191,89) -- (241,89) ;
\draw [color={rgb, 255:red, 208; green, 2; blue, 27 }  ,draw opacity=1 ]   (191,139) -- (241,139) ;
\draw    (191,89) -- (191,139) ;
\draw    (241,89) -- (241,139) ;
\draw [color={rgb, 255:red, 208; green, 2; blue, 27 }  ,draw opacity=1 ]   (191,189) -- (241,189) ;
\draw [color={rgb, 255:red, 208; green, 2; blue, 27 }  ,draw opacity=1 ]   (191,139) -- (191,189) ;
\draw [color={rgb, 255:red, 208; green, 2; blue, 27 }  ,draw opacity=1 ]   (241,139) -- (241,189) ;
\draw [color={rgb, 255:red, 208; green, 2; blue, 27 }  ,draw opacity=1 ]   (191,39.6) -- (241,39.6) ;
\draw    (191,39.6) -- (191,89.6) ;
\draw    (241,39.6) -- (241,89.6) ;
\draw [color={rgb, 255:red, 208; green, 2; blue, 27 }  ,draw opacity=1 ]   (290.04,39.8) -- (340.04,39.8) ;
\draw    (290.04,39.8) -- (290.04,89.8) ;
\draw    (340.04,39.8) -- (340.04,89.8) ;
\draw [color={rgb, 255:red, 208; green, 2; blue, 27 }  ,draw opacity=1 ]   (389.5,90.1) -- (439.5,90.1) ;
\draw [color={rgb, 255:red, 208; green, 2; blue, 27 }  ,draw opacity=1 ]   (389.5,140.1) -- (439.5,140.1) ;
\draw [color={rgb, 255:red, 0; green, 0; blue, 0 }  ,draw opacity=1 ]   (389.5,90.1) -- (389.5,140.1) ;
\draw [color={rgb, 255:red, 0; green, 0; blue, 0 }  ,draw opacity=1 ]   (439.5,90.1) -- (439.5,140.1) ;
\draw [color={rgb, 255:red, 208; green, 2; blue, 27 }  ,draw opacity=1 ]   (389.5,190.1) -- (439.5,190.1) ;
\draw [color={rgb, 255:red, 0; green, 0; blue, 0 }  ,draw opacity=1 ]   (389.5,140.1) -- (389.5,190.1) ;
\draw [color={rgb, 255:red, 0; green, 0; blue, 0 }  ,draw opacity=1 ]   (439.5,140.1) -- (439.5,190.1) ;
\draw [color={rgb, 255:red, 208; green, 2; blue, 27 }  ,draw opacity=1 ]   (389.54,40.3) -- (439.54,40.3) ;
\draw [color={rgb, 255:red, 208; green, 2; blue, 27 }  ,draw opacity=1 ]   (389.54,40.3) -- (389.54,90.3) ;
\draw [color={rgb, 255:red, 208; green, 2; blue, 27 }  ,draw opacity=1 ]   (439.54,40.3) -- (439.54,90.3) ;

\draw (342.36,157.04) node [anchor=north west][inner sep=0.75pt]  [font=\scriptsize]  {$f$};
\draw (279.36,156.04) node [anchor=north west][inner sep=0.75pt]  [font=\scriptsize]  {$f$};
\draw (308.61,156.9) node [anchor=north west][inner sep=0.75pt]  [font=\scriptsize]  {$U$};
\draw (304.61,107.9) node [anchor=north west][inner sep=0.75pt]  [font=\scriptsize]  {$f^{\ast } \varphi $};
\draw (243.36,107.44) node [anchor=north west][inner sep=0.75pt]  [font=\scriptsize]  {$f$};
\draw (180.36,107.44) node [anchor=north west][inner sep=0.75pt]  [font=\scriptsize]  {$f$};
\draw (211.61,156.3) node [anchor=north west][inner sep=0.75pt]  [font=\scriptsize]  {$\varphi $};
\draw (208.61,110.3) node [anchor=north west][inner sep=0.75pt]  [font=\scriptsize]  {$U$};
\draw (256.56,103.34) node [anchor=north west][inner sep=0.75pt]  [font=\scriptsize]  {$=$};
\draw (243.36,58.04) node [anchor=north west][inner sep=0.75pt]  [font=\scriptsize]  {$g$};
\draw (180.36,58.04) node [anchor=north west][inner sep=0.75pt]  [font=\scriptsize]  {$g$};
\draw (208.61,60.9) node [anchor=north west][inner sep=0.75pt]  [font=\scriptsize]  {$U$};
\draw (342.4,58.24) node [anchor=north west][inner sep=0.75pt]  [font=\scriptsize]  {$g$};
\draw (279.4,58.24) node [anchor=north west][inner sep=0.75pt]  [font=\scriptsize]  {$g$};
\draw (307.65,61.1) node [anchor=north west][inner sep=0.75pt]  [font=\scriptsize]  {$U$};
\draw (441.86,157.54) node [anchor=north west][inner sep=0.75pt]  [font=\scriptsize]  {$f$};
\draw (378.86,156.54) node [anchor=north west][inner sep=0.75pt]  [font=\scriptsize]  {$f$};
\draw (408.11,157.4) node [anchor=north west][inner sep=0.75pt]  [font=\scriptsize]  {$U$};
\draw (399.11,59.4) node [anchor=north west][inner sep=0.75pt]  [font=\scriptsize]  {$g^{\ast } f^{\ast } \varphi $};
\draw (441.9,111.74) node [anchor=north west][inner sep=0.75pt]  [font=\scriptsize]  {$g$};
\draw (378.9,112.74) node [anchor=north west][inner sep=0.75pt]  [font=\scriptsize]  {$g$};
\draw (408.9,110.35) node [anchor=north west][inner sep=0.75pt]  [font=\scriptsize]  {$U$};
\draw (356.06,103.09) node [anchor=north west][inner sep=0.75pt]  [font=\scriptsize]  {$=$};

\end{tikzpicture}

\end{center}
in that case, the operation $\varphi\mapsto f^\ast \varphi$ defines a $\pi_2$-opindexing $\Phi$ of $H^\ast C$, and we will say that the double category $C$ induces the $\pi_2$-opindexing $\Phi$. More precisely.

\begin{definition}
  Let $\decB$ be a decorated bicategory. Let $\Phi$ be a $\pi_2$-opindexing on $\decB$. Let $C$ be an internalization of $\decB$. We will say that $C$ induces $\Phi$ if for every vertical morphism $f$ from object $a$ to object $b$ and for every $\varphi\in\pi_2(C,b)$ we have
 \begin{center}

\tikzset{every picture/.style={line width=0.75pt}} 

\begin{tikzpicture}[x=0.75pt,y=0.75pt,yscale=-1,xscale=1]

\draw [color={rgb, 255:red, 208; green, 2; blue, 27 }  ,draw opacity=1 ]   (160,161.6) -- (210,161.6) ;
\draw [color={rgb, 255:red, 208; green, 2; blue, 27 }  ,draw opacity=1 ]   (160,211.6) -- (210,211.6) ;
\draw    (160,161.6) -- (160,211.6) ;
\draw    (210,161.6) -- (210,211.6) ;
\draw [color={rgb, 255:red, 208; green, 2; blue, 27 }  ,draw opacity=1 ]   (160,261.6) -- (210,261.6) ;
\draw [color={rgb, 255:red, 208; green, 2; blue, 27 }  ,draw opacity=1 ]   (160,211.6) -- (160,261.6) ;
\draw [color={rgb, 255:red, 208; green, 2; blue, 27 }  ,draw opacity=1 ]   (210,211.6) -- (210,261.6) ;
\draw [color={rgb, 255:red, 208; green, 2; blue, 27 }  ,draw opacity=1 ]   (260,160.6) -- (310,160.6) ;
\draw [color={rgb, 255:red, 208; green, 2; blue, 27 }  ,draw opacity=1 ]   (260,210.6) -- (310,210.6) ;
\draw [color={rgb, 255:red, 208; green, 2; blue, 27 }  ,draw opacity=1 ]   (260,160.6) -- (260,210.6) ;
\draw [color={rgb, 255:red, 208; green, 2; blue, 27 }  ,draw opacity=1 ]   (310,160.6) -- (310,210.6) ;
\draw [color={rgb, 255:red, 208; green, 2; blue, 27 }  ,draw opacity=1 ]   (260,260.6) -- (310,260.6) ;
\draw [color={rgb, 255:red, 0; green, 0; blue, 0 }  ,draw opacity=1 ]   (260,210.6) -- (260,260.6) ;
\draw [color={rgb, 255:red, 0; green, 0; blue, 0 }  ,draw opacity=1 ]   (310,210.6) -- (310,260.6) ;

\draw (212.36,180.04) node [anchor=north west][inner sep=0.75pt]  [font=\scriptsize]  {$f$};
\draw (149.36,180.04) node [anchor=north west][inner sep=0.75pt]  [font=\scriptsize]  {$f$};
\draw (180.61,228.9) node [anchor=north west][inner sep=0.75pt]  [font=\scriptsize]  {$\varphi $};
\draw (177.61,182.9) node [anchor=north west][inner sep=0.75pt]  [font=\scriptsize]  {$U$};
\draw (312.36,228.04) node [anchor=north west][inner sep=0.75pt]  [font=\scriptsize]  {$f$};
\draw (249.36,227.04) node [anchor=north west][inner sep=0.75pt]  [font=\scriptsize]  {$f$};
\draw (278.61,227.9) node [anchor=north west][inner sep=0.75pt]  [font=\scriptsize]  {$U$};
\draw (269.61,181.9) node [anchor=north west][inner sep=0.75pt]  [font=\scriptsize]  {$\Phi _{f}( \varphi )$};
\draw (227.56,204.44) node [anchor=north west][inner sep=0.75pt]  [font=\scriptsize]  {$=$};

\end{tikzpicture}

 \end{center}
\noindent An analogous condition defines when an internalization induces a $\pi_2$-indexing.
\end{definition}
\noindent Observe that a double category $C$ induces a $\pi_2$-opindexing in the sense of Definition \ref{def:categoryinducing} if and only if $C$ admits an operation of sliding squares up unit squares, in the sense above. We can interpret the first part of the statement of Theorem \ref{thm:mainindexings} as saying that every $\pi_2$-opindexing on a decorated bicategory $\decB$ is induced by an internalization of $\decB$. Indeed, if $\Phi$ is a $\pi_2$-opindexing, then $\Phi$ is induced by $\crossb$.
\begin{definition}\label{def:categoryinducing}
Let $\decB$ be a decorated bicategory. Let $\Phi$ be a $\pi_2$-opindexing on $\decB$. Let $C$ and $C'$ be double categories inducing $\Phi$. A morphism from $C$ to $C'$ is a strict double functor $F:C\to C'$ such that the assignment associating to every object $a$ of $B^\ast$, the restriction $F_1:\pi_2(C,a)\to \pi_2(C,F_0(a))$ defines a natural transformation
\begin{center}
    \begin{tikzpicture}
\matrix(m)[matrix of math nodes, row sep=2em, column sep=2em,text height=2ex, text depth=0.25ex]
{B^\ast &&\\
&& \textbf{commMon}\\
B^\ast &&\\};
\path[->,font=\scriptsize,>=angle 90]
(m-1-1) edge node [left]{$F_0$} (m-3-1)
        edge node [above]{$\Phi$} (m-2-3)
        edge [white,bend left=55] node [black]{$F_1\Downarrow$}(m-3-1)
(m-3-1) edge node [below]{$\Phi$} (m-2-3)

;
\end{tikzpicture}
\end{center}
\noindent We will write $\dcat _\Phi$ for the category whose objects are internalizations of $\decB$ inducing $\Phi$ and whose morphisms are morphisms as above.
\end{definition}
\begin{obs}\label{obs:morphism}
Let $\decB$. Let $\Phi$ be a $\pi_2$-opindexing on $\decB$. Let $C$ and $C'$ be internalizations of $\decB$, inducing $\Phi$. The condition appearing in Definition \ref{def:categoryinducing} says that applying the square component $F_1$ of the double functor $F_1$ to a square of the form
\begin{center}

\tikzset{every picture/.style={line width=0.75pt}} 

\begin{tikzpicture}[x=0.75pt,y=0.75pt,yscale=-1,xscale=1]

\draw [color={rgb, 255:red, 208; green, 2; blue, 27 }  ,draw opacity=1 ]   (180,202.6) -- (230,202.6) ;
\draw [color={rgb, 255:red, 208; green, 2; blue, 27 }  ,draw opacity=1 ]   (180,252.6) -- (230,252.6) ;
\draw [color={rgb, 255:red, 208; green, 2; blue, 27 }  ,draw opacity=1 ]   (180,202.6) -- (180,252.6) ;
\draw [color={rgb, 255:red, 208; green, 2; blue, 27 }  ,draw opacity=1 ]   (230,202.6) -- (230,252.6) ;

\draw (194.61,219.9) node [anchor=north west][inner sep=0.75pt]  [font=\scriptsize]  {$\Phi _{f} \varphi $};

\end{tikzpicture}

\end{center}
one obtains the equation 
\begin{center}

\tikzset{every picture/.style={line width=0.75pt}} 

\begin{tikzpicture}[x=0.75pt,y=0.75pt,yscale=-1,xscale=1]

\draw [color={rgb, 255:red, 208; green, 2; blue, 27 }  ,draw opacity=1 ]   (200,131) -- (250,131) ;
\draw [color={rgb, 255:red, 208; green, 2; blue, 27 }  ,draw opacity=1 ]   (200,181) -- (250,181) ;
\draw [color={rgb, 255:red, 208; green, 2; blue, 27 }  ,draw opacity=1 ]   (200,131) -- (200,181) ;
\draw [color={rgb, 255:red, 208; green, 2; blue, 27 }  ,draw opacity=1 ]   (250,131) -- (250,181) ;
\draw [color={rgb, 255:red, 208; green, 2; blue, 27 }  ,draw opacity=1 ]   (291,130) -- (341,130) ;
\draw [color={rgb, 255:red, 208; green, 2; blue, 27 }  ,draw opacity=1 ]   (291,180) -- (341,180) ;
\draw [color={rgb, 255:red, 208; green, 2; blue, 27 }  ,draw opacity=1 ]   (291,130) -- (291,180) ;
\draw [color={rgb, 255:red, 208; green, 2; blue, 27 }  ,draw opacity=1 ]   (341,130) -- (341,180) ;

\draw (206.61,148.3) node [anchor=north west][inner sep=0.75pt]  [font=\scriptsize]  {$F_{1} \Phi _{f} \varphi $};
\draw (299.61,147.3) node [anchor=north west][inner sep=0.75pt]  [font=\scriptsize]  {$\Phi _{f_{0} f} \varphi $};
\draw (262.61,148.7) node [anchor=north west][inner sep=0.75pt]  [font=\scriptsize]  {$=$};

\end{tikzpicture}

\end{center}
i.e. applying $F_1$ to an equation of the form
\begin{center}

\tikzset{every picture/.style={line width=0.75pt}} 

\begin{tikzpicture}[x=0.75pt,y=0.75pt,yscale=-1,xscale=1]

\draw [color={rgb, 255:red, 208; green, 2; blue, 27 }  ,draw opacity=1 ]   (160,161.6) -- (210,161.6) ;
\draw [color={rgb, 255:red, 208; green, 2; blue, 27 }  ,draw opacity=1 ]   (160,211.6) -- (210,211.6) ;
\draw    (160,161.6) -- (160,211.6) ;
\draw    (210,161.6) -- (210,211.6) ;
\draw [color={rgb, 255:red, 208; green, 2; blue, 27 }  ,draw opacity=1 ]   (160,261.6) -- (210,261.6) ;
\draw [color={rgb, 255:red, 208; green, 2; blue, 27 }  ,draw opacity=1 ]   (160,211.6) -- (160,261.6) ;
\draw [color={rgb, 255:red, 208; green, 2; blue, 27 }  ,draw opacity=1 ]   (210,211.6) -- (210,261.6) ;
\draw [color={rgb, 255:red, 208; green, 2; blue, 27 }  ,draw opacity=1 ]   (260,160.6) -- (310,160.6) ;
\draw [color={rgb, 255:red, 208; green, 2; blue, 27 }  ,draw opacity=1 ]   (260,210.6) -- (310,210.6) ;
\draw [color={rgb, 255:red, 208; green, 2; blue, 27 }  ,draw opacity=1 ]   (260,160.6) -- (260,210.6) ;
\draw [color={rgb, 255:red, 208; green, 2; blue, 27 }  ,draw opacity=1 ]   (310,160.6) -- (310,210.6) ;
\draw [color={rgb, 255:red, 208; green, 2; blue, 27 }  ,draw opacity=1 ]   (260,260.6) -- (310,260.6) ;
\draw [color={rgb, 255:red, 0; green, 0; blue, 0 }  ,draw opacity=1 ]   (260,210.6) -- (260,260.6) ;
\draw [color={rgb, 255:red, 0; green, 0; blue, 0 }  ,draw opacity=1 ]   (310,210.6) -- (310,260.6) ;

\draw (212.36,180.04) node [anchor=north west][inner sep=0.75pt]  [font=\scriptsize]  {$f$};
\draw (149.36,180.04) node [anchor=north west][inner sep=0.75pt]  [font=\scriptsize]  {$f$};
\draw (180.61,228.9) node [anchor=north west][inner sep=0.75pt]  [font=\scriptsize]  {$\varphi $};
\draw (177.61,182.9) node [anchor=north west][inner sep=0.75pt]  [font=\scriptsize]  {$U$};
\draw (312.36,228.04) node [anchor=north west][inner sep=0.75pt]  [font=\scriptsize]  {$f$};
\draw (249.36,227.04) node [anchor=north west][inner sep=0.75pt]  [font=\scriptsize]  {$f$};
\draw (278.61,227.9) node [anchor=north west][inner sep=0.75pt]  [font=\scriptsize]  {$U$};
\draw (269.61,181.9) node [anchor=north west][inner sep=0.75pt]  [font=\scriptsize]  {$\Phi _{f}( \varphi )$};
\draw (227.56,204.44) node [anchor=north west][inner sep=0.75pt]  [font=\scriptsize]  {$=$};

\end{tikzpicture}

\end{center}
one obtains the equation
\begin{center}

\tikzset{every picture/.style={line width=0.75pt}} 

\begin{tikzpicture}[x=0.75pt,y=0.75pt,yscale=-1,xscale=1]

\draw [color={rgb, 255:red, 208; green, 2; blue, 27 }  ,draw opacity=1 ]   (230,90.6) -- (280,90.6) ;
\draw [color={rgb, 255:red, 208; green, 2; blue, 27 }  ,draw opacity=1 ]   (230,140.6) -- (280,140.6) ;
\draw    (230,90.6) -- (230,140.6) ;
\draw    (280,90.6) -- (280,140.6) ;
\draw [color={rgb, 255:red, 208; green, 2; blue, 27 }  ,draw opacity=1 ]   (230,190.6) -- (280,190.6) ;
\draw [color={rgb, 255:red, 208; green, 2; blue, 27 }  ,draw opacity=1 ]   (230,140.6) -- (230,190.6) ;
\draw [color={rgb, 255:red, 208; green, 2; blue, 27 }  ,draw opacity=1 ]   (280,140.6) -- (280,190.6) ;
\draw [color={rgb, 255:red, 208; green, 2; blue, 27 }  ,draw opacity=1 ]   (330,89.6) -- (380,89.6) ;
\draw [color={rgb, 255:red, 208; green, 2; blue, 27 }  ,draw opacity=1 ]   (330,139.6) -- (380,139.6) ;
\draw [color={rgb, 255:red, 208; green, 2; blue, 27 }  ,draw opacity=1 ]   (330,89.6) -- (330,139.6) ;
\draw [color={rgb, 255:red, 208; green, 2; blue, 27 }  ,draw opacity=1 ]   (380,89.6) -- (380,139.6) ;
\draw [color={rgb, 255:red, 208; green, 2; blue, 27 }  ,draw opacity=1 ]   (330,189.6) -- (380,189.6) ;
\draw [color={rgb, 255:red, 0; green, 0; blue, 0 }  ,draw opacity=1 ]   (330,139.6) -- (330,189.6) ;
\draw [color={rgb, 255:red, 0; green, 0; blue, 0 }  ,draw opacity=1 ]   (380,139.6) -- (380,189.6) ;

\draw (281.36,109.04) node [anchor=north west][inner sep=0.75pt]  [font=\scriptsize]  {$\ Ff$};
\draw (210.36,109.04) node [anchor=north west][inner sep=0.75pt]  [font=\scriptsize]  {$Ff$};
\draw (246.61,157.9) node [anchor=north west][inner sep=0.75pt]  [font=\scriptsize]  {$F\varphi $};
\draw (247.61,111.9) node [anchor=north west][inner sep=0.75pt]  [font=\scriptsize]  {$U$};
\draw (384.36,157.04) node [anchor=north west][inner sep=0.75pt]  [font=\scriptsize]  {$Ff$};
\draw (311.36,156.04) node [anchor=north west][inner sep=0.75pt]  [font=\scriptsize]  {$Ff$};
\draw (348.61,156.9) node [anchor=north west][inner sep=0.75pt]  [font=\scriptsize]  {$U$};
\draw (330.61,110.9) node [anchor=north west][inner sep=0.75pt]  [font=\scriptsize]  {$\Phi _{Ff}( F\varphi )$};
\draw (297.56,133.44) node [anchor=north west][inner sep=0.75pt]  [font=\scriptsize]  {$=$};

\end{tikzpicture}

\end{center}
Symbolically this is expressed in the following simple equation for every red boundary square $\varphi$ and every appropriate vertical morphism $f$
\begin{equation}\label{eq:morphismequation}
 F_1(\Phi_f(\varphi))=\Phi_{F_0f}(F_1\varphi)   
\end{equation}
\end{obs}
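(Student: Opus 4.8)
The plan is to obtain equation \eqref{eq:morphismequation} by simply unwinding the definition of a natural transformation and then to recognise this algebraic identity as the equality of pictures displayed in the statement. By Definition \ref{def:categoryinducing}, $F$ is a morphism precisely when the components $F_1(a)\colon\pi_2(C,a)\to\pi_2(C',F_0a)$ assemble into a natural transformation $F_1\colon\Phi\Rightarrow\Phi F_0$ of the functors $B^{\ast op}\to\commMon$ appearing there. Since $\Phi$ is a $\pi_2$-opindexing, a vertical morphism $f\colon a\to b$ induces the monoid map $\Phi_f\colon\pi_2(B,b)\to\pi_2(B,a)$, so the naturality square at $f$ reads $F_1\circ\Phi_f=\Phi_{F_0f}\circ F_1$. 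Evaluating both sides at a red boundary square $\varphi\in\pi_2(C,b)$ gives exactly
\[ F_1(\Phi_f(\varphi))=\Phi_{F_0f}(F_1\varphi), \]
which is \eqref{eq:morphismequation}. In this sense the first step is essentially tautological: the naturality condition of Definition \ref{def:categoryinducing} is, by the very definition of a natural transformation, equation \eqref{eq:morphismequation}.

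The second step is to exhibit each side of \eqref{eq:morphismequation} as a red boundary square arising from a slide, so that the algebraic identity becomes the equality of squares drawn in the statement. I would start from the defining equation of the induced opindexing in $C$, namely $U_f\boxminus\varphi=\Phi_f(\varphi)\boxminus U_f$, and apply the double functor $F$ to it. Strictness of $F$ is what makes this clean: $F$ preserves vertical composition on the nose, carries the unit square $U_f$ to the unit square $U_{F_0f}$, and sends the red boundary square $\varphi$ to the red boundary square $F_1\varphi\in\pi_2(C',F_0b)$. Applying $F$ therefore yields
\[ U_{F_0f}\boxminus F_1\varphi=F_1(\Phi_f\varphi)\boxminus U_{F_0f} \]
in $C'$, which exhibits $F_1(\Phi_f\varphi)$ as a slide of $F_1\varphi$ up $U_{F_0f}$ and is precisely the left-hand picture of the equation in the statement.

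Finally I would compare this with the slide produced by the hypothesis that $C'$ itself induces $\Phi$, namely $U_{F_0f}\boxminus F_1\varphi=\Phi_{F_0f}(F_1\varphi)\boxminus U_{F_0f}$. The two displayed equations present $F_1(\Phi_f\varphi)$ and $\Phi_{F_0f}(F_1\varphi)$ as a priori distinct candidate slides of the single square $F_1\varphi$ up $U_{F_0f}$, and the naturality condition is exactly the demand that these two red boundary squares coincide; this is the equality of pictures in the statement and, once more, equation \eqref{eq:morphismequation}. I expect the only point requiring care to be the bookkeeping of variance and base objects: because $\Phi$ is a $\pi_2$-opindexing, $f\colon a\to b$ acts in the direction $\pi_2(C,b)\to\pi_2(C,a)$, and one must keep track that $F_1\varphi$ lies over $F_0b$ while both $F_1(\Phi_f\varphi)$ and $\Phi_{F_0f}(F_1\varphi)$ lie over $F_0a$, so that the naturality square and the square equation line up. Strictness of $F$ is essential throughout: for a merely pseudo double functor the images $F(U_f)$ and $F(\varphi\boxminus\psi)$ would only be coherently isomorphic to $U_{F_0f}$ and $F\varphi\boxminus F\psi$, and the square identities above would hold only up to globular comparison cells.
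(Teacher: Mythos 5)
Your proposal is correct and takes essentially the same route as the paper: the Observation carries no separate proof because it is exactly the definitional unwinding you perform, reading the naturality square of $F_1\colon\Phi\Rightarrow\Phi F_0$ at $f$ as $F_1(\Phi_f(\varphi))=\Phi_{F_0f}(F_1\varphi)$ and recognizing this, via the slide equations in $C$ and $C'$, as the stated equality of pictures. Your closing caveats (that the two candidate slides are a priori distinct, so the condition is a genuine demand rather than automatic, and that strictness of $F$ is needed to preserve units and vertical composites on the nose) are exactly the right points of care.
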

\noindent Although double categories inducing a $\pi_2$-opindexing $\Phi$ on a decorated internalization $\decB$ more naturally organize into the category $\dcat_\Phi$, we are more interested in the subcategory $\overline{\dcat}_\Phi$ of $\dcat_\Phi$ having the same objects as $\dcat_\Phi$, but where morphisms are now morphisms $F:C\to C'$ in $\dcat_\Phi$ such that $H^\ast F=id_{\decB}$, i.e. such that $F$ is the identity on globular squares and vertical morphisms. Such morphisms have already been considered in e.g. \cite{Orendain3}. The next theorem is the main result of this note. It says that the construction of the double category $\crossb$ from the data of a $\pi_2$-opindexing $\Phi$ is universal.
\begin{thm}\label{thm:initial}
Let $\decB$ be a decorated bicategory. Let $\Phi$ be a $\pi_2$-indexing on $\decB$. The double category $\crossb$ is an initial object in $\overline{\dcat_\Phi}$. Moreover, given a double category inducing $\Phi$, the unique morphism
\[!:\crossb\to C\]
is such that the decorated horzontalization $H^\ast !=id_{\decB}$ and $!$ is full on globularly generated squares of $C$.
\end{thm}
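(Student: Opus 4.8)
The plan is to construct the unique morphism $!\colon\crossb\to C$ by hand and then read off both initiality and the two supplementary claims. Since every object $C$ of $\overline{\dcat_\Phi}$ is an internalization of $\decB$, we have $H^\ast C=\decB=H^\ast\crossb$, so the objects, vertical morphisms and globular squares of the two double categories are canonically identified. Any morphism $F$ in $\overline{\dcat_\Phi}$ satisfies $H^\ast F=\id_{\decB}$, hence is forced to be the identity on objects, on vertical morphisms and on globular squares; in particular it must send the unit square $U_f$ of $\crossb$ to the unit square $U_f$ of $C$. Because every nonglobular square of $\crossb$ is the canonical composite $\varphi_\downarrow\boxminus U_f\boxminus\varphi_\uparrow$, strictness then forces $F$ to send it to the corresponding composite $\varphi_\downarrow\boxminus U_f\boxminus\varphi_\uparrow$ computed in $C$. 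This simultaneously establishes uniqueness and dictates the definition of $!$: the identity on globular data, and $(\varphi_\downarrow,f,\varphi_\uparrow)\mapsto\varphi_\downarrow\boxminus U_f\boxminus\varphi_\uparrow$ on the remaining squares.

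The substance is to check that this prescription is well defined and functorial, and here the hypothesis that $C$ induces $\Phi$ does all the work. For well-definedness one must show the assignment respects the single relation of Observation \ref{obs:squarescrosspsame}: if $(\varphi_\downarrow,f,\varphi_\uparrow)$ and $(\psi_\downarrow,f,\psi_\uparrow)$ are related through a red boundary square $\nu\in\pi_2(B,b)$, then their images agree in $C$. I would verify this by transcribing, inside $C$, the pictorial chain displayed in Observation \ref{obs:squarescrosspsame}; the one nontrivial move is sliding $\nu$ through the unit $U_f$, which in $C$ is precisely the defining equation of an internalization inducing $\Phi$ (sliding a red boundary square at $b$ upward through $U_f$ replaces it by its image under $\Phi_f$). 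Preservation of units is immediate, as $U_f=(\id,f,\id)$. Preservation of horizontal composition follows from the interchange law in $C$ together with $U_f\boxbar U_f=U_f$, which matches the horizontal composition formula $(\varphi_\downarrow\boxbar\psi_\downarrow,f,\varphi_\uparrow\boxbar\psi_\uparrow)$ recorded for $\crossb$.

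Preservation of vertical composition is the main obstacle, and it too is a sliding computation. Starting from the six-fold vertical stack in $C$ which reads, from top to bottom, $\varphi_\uparrow,\,U_f,\,\varphi_\downarrow,\,\psi_\uparrow,\,U_g,\,\psi_\downarrow$, I would first amalgamate the two adjacent globular squares $\varphi_\downarrow$ and $\psi_\uparrow$ into the single red boundary square $\psi_\uparrow\boxminus\varphi_\downarrow\in\pi_2(B,b)$, then slide it through the intervening unit using the induced $\Phi$, and finally invoke $U_g\boxminus U_f=U_{gf}$ to land exactly on the canonical form of the $\crossb$-composite (the one featuring $\Phi$ applied to $\psi_\uparrow\boxminus\varphi_\downarrow$). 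This is literally the computation defining vertical composition in $\crossb$, now carried out inside $C$, so the two sides coincide and $!$ preserves $\boxminus$. The matching of orientations and of the index carried by $\Phi$ must be done with care, but no new idea is required beyond the single sliding identity.

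The two supplementary claims are then cheap. That $H^\ast\,!=\id_{\decB}$ is immediate from the construction, since $!$ is the identity on all globular data. For fullness on globularly generated squares, observe that the image $!(\crossb)$ is a sub-double category of $C$ (the strict image of a double functor is closed under both compositions and contains all units), and it contains every globular square and every vertical morphism, so $H^\ast\big(!(\crossb)\big)=\decB=H^\ast C$. By the minimality property of the globularly generated piece recalled in Subsection \ref{ss:ggdoublecats}---namely that $\gamma C$ is contained in every sub-double category $D\subseteq C$ with $H^\ast D=H^\ast C$---we obtain $\gamma C\subseteq{}!(\crossb)$. Hence every globularly generated square of $C$ lies in the image of $!$, and since $!$ fixes all boundary data this is exactly the assertion that $!$ is full on the globularly generated squares of $C$.
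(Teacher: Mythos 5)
Your proposal is correct and follows essentially the same route as the paper's own proof: the forced definition of $!$ on canonical squares, well-definedness checked against the single relation of Observation \ref{obs:squarescrosspsame} via the sliding equation supplied by the inducing condition, preservation of horizontal composition via interchange and of vertical composition via the same sliding move, and fullness from the fact that the image contains all globular and unit squares and hence all of $\gamma C$. The only cosmetic difference is that you invoke the minimality property of the globularly generated piece for fullness, while the paper cites the subdivision of squares of $\gamma C$ into globular and unit squares directly --- these are the same fact.
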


\begin{proof}
Squares in $\crossb$ are either globular squares or are of the form 
\begin{center}

\tikzset{every picture/.style={line width=0.75pt}} 



\end{center}
The image $!(\varphi\boxvert \psi)$ of $\varphi\boxvert\psi$, under $!$, is thus $!(\varphi)\boxvert !(\psi)$. We conclude that $!$ is compatible with horizontal composition. $!$ is clearly compatible with vertical identities. We now prove compatibility with vertical composition. Let $\varphi=(\varphi_\downarrow, f,\varphi_\uparrow)$ and $\psi=(\psi_\downarrow, g,\psi_\uparrow)$, where $\varphi_\downarrow$ and $\psi_\uparrow$ are vertically composable globular squares in $C$. The vertical composition $\psi\boxminus \varphi$ is the square 

\[(\psi_\downarrow\boxminus\Phi_g(\psi_\uparrow\boxminus\varphi_\downarrow), gf, \varphi_\uparrow)\]
Geometrically
\begin{center}

\tikzset{every picture/.style={line width=0.75pt}} 

\begin{tikzpicture}[x=0.75pt,y=0.75pt,yscale=-1,xscale=1]

\draw [color={rgb, 255:red, 0; green, 0; blue, 0 }  ,draw opacity=1 ] [dash pattern={on 0.84pt off 2.51pt}]  (302.1,28.16) -- (352.1,28.16) ;
\draw [color={rgb, 255:red, 208; green, 2; blue, 27 }  ,draw opacity=1 ] [dash pattern={on 0.84pt off 2.51pt}]  (302.1,28.16) -- (302.1,78.16) ;
\draw [color={rgb, 255:red, 208; green, 2; blue, 27 }  ,draw opacity=1 ] [dash pattern={on 0.84pt off 2.51pt}]  (352.1,28.16) -- (352.1,78.16) ;
\draw [color={rgb, 255:red, 0; green, 0; blue, 0 }  ,draw opacity=1 ] [dash pattern={on 0.84pt off 2.51pt}]  (302,128.21) -- (302,178.21) ;
\draw [color={rgb, 255:red, 0; green, 0; blue, 0 }  ,draw opacity=1 ] [dash pattern={on 0.84pt off 2.51pt}]  (352,128.21) -- (352,178.21) ;
\draw [color={rgb, 255:red, 208; green, 2; blue, 27 }  ,draw opacity=1 ] [dash pattern={on 0.84pt off 2.51pt}]  (302,178.21) -- (352,178.21) ;
\draw [color={rgb, 255:red, 0; green, 0; blue, 0 }  ,draw opacity=1 ] [dash pattern={on 0.84pt off 2.51pt}]  (302,227.81) -- (352,227.81) ;
\draw [color={rgb, 255:red, 208; green, 2; blue, 27 }  ,draw opacity=1 ] [dash pattern={on 0.84pt off 2.51pt}]  (302,177.81) -- (302,227.81) ;
\draw [color={rgb, 255:red, 208; green, 2; blue, 27 }  ,draw opacity=1 ] [dash pattern={on 0.84pt off 2.51pt}]  (352,177.81) -- (352,227.81) ;
\draw [color={rgb, 255:red, 208; green, 2; blue, 27 }  ,draw opacity=1 ] [dash pattern={on 0.84pt off 2.51pt}]  (302.17,128.07) -- (352.17,128.07) ;
\draw [color={rgb, 255:red, 208; green, 2; blue, 27 }  ,draw opacity=1 ] [dash pattern={on 0.84pt off 2.51pt}]  (302.17,78.07) -- (302.17,128.07) ;
\draw [color={rgb, 255:red, 208; green, 2; blue, 27 }  ,draw opacity=1 ] [dash pattern={on 0.84pt off 2.51pt}]  (352.17,78.07) -- (352.17,128.07) ;
\draw [color={rgb, 255:red, 208; green, 2; blue, 27 }  ,draw opacity=1 ] [dash pattern={on 0.84pt off 2.51pt}]  (302.17,78.07) -- (352.17,78.07) ;

\draw (320.06,43.4) node [anchor=north west][inner sep=0.75pt]  [font=\scriptsize]  {$\varphi _{\uparrow }$};
\draw (355.36,144.65) node [anchor=north west][inner sep=0.75pt]  [font=\scriptsize]  {$gf$};
\draw (286.36,143.65) node [anchor=north west][inner sep=0.75pt]  [font=\scriptsize]  {$gf$};
\draw (320.36,194.25) node [anchor=north west][inner sep=0.75pt]  [font=\scriptsize]  {$\psi _{\downarrow }$};
\draw (320.21,146.31) node [anchor=north west][inner sep=0.75pt]  [font=\scriptsize]  {$U$};
\draw (303.8,97.98) node [anchor=north west][inner sep=0.75pt]  [font=\tiny]  {$\Phi _{g}( \psi _{\uparrow } \boxminus \varphi _{\downarrow })$};

\end{tikzpicture}

\end{center}
The vertical composition $!(\psi)\boxminus !(\psi)$ is, on the other hand, the composite
\begin{center}

\tikzset{every picture/.style={line width=0.75pt}} 

\begin{tikzpicture}[x=0.75pt,y=0.75pt,yscale=-1,xscale=1]

\draw [color={rgb, 255:red, 0; green, 0; blue, 0 }  ,draw opacity=1 ]   (300,69.4) -- (350,69.4) ;
\draw [color={rgb, 255:red, 208; green, 2; blue, 27 }  ,draw opacity=1 ]   (300,119.4) -- (350,119.4) ;
\draw [color={rgb, 255:red, 208; green, 2; blue, 27 }  ,draw opacity=1 ]   (300,69.4) -- (300,119.4) ;
\draw [color={rgb, 255:red, 208; green, 2; blue, 27 }  ,draw opacity=1 ]   (350,69.4) -- (350,119.4) ;
\draw [color={rgb, 255:red, 0; green, 0; blue, 0 }  ,draw opacity=1 ]   (300,119.4) -- (300,169.4) ;
\draw [color={rgb, 255:red, 0; green, 0; blue, 0 }  ,draw opacity=1 ]   (350,119.4) -- (350,169.4) ;
\draw [color={rgb, 255:red, 208; green, 2; blue, 27 }  ,draw opacity=1 ]   (300,169.4) -- (350,169.4) ;
\draw [color={rgb, 255:red, 0; green, 0; blue, 0 }  ,draw opacity=1 ]   (300,219) -- (350,219) ;
\draw [color={rgb, 255:red, 208; green, 2; blue, 27 }  ,draw opacity=1 ]   (300,169) -- (300,219) ;
\draw [color={rgb, 255:red, 208; green, 2; blue, 27 }  ,draw opacity=1 ]   (350,169) -- (350,219) ;
\draw [color={rgb, 255:red, 208; green, 2; blue, 27 }  ,draw opacity=1 ]   (300,268.4) -- (350,268.4) ;
\draw [color={rgb, 255:red, 208; green, 2; blue, 27 }  ,draw opacity=1 ]   (300,218.4) -- (300,268.4) ;
\draw [color={rgb, 255:red, 208; green, 2; blue, 27 }  ,draw opacity=1 ]   (350,218.4) -- (350,268.4) ;
\draw [color={rgb, 255:red, 0; green, 0; blue, 0 }  ,draw opacity=1 ]   (300,268.4) -- (300,318.4) ;
\draw [color={rgb, 255:red, 0; green, 0; blue, 0 }  ,draw opacity=1 ]   (350,268.4) -- (350,318.4) ;
\draw [color={rgb, 255:red, 208; green, 2; blue, 27 }  ,draw opacity=1 ]   (300,318.4) -- (350,318.4) ;
\draw [color={rgb, 255:red, 0; green, 0; blue, 0 }  ,draw opacity=1 ]   (300,368) -- (350,368) ;
\draw [color={rgb, 255:red, 208; green, 2; blue, 27 }  ,draw opacity=1 ]   (300,318) -- (300,368) ;
\draw [color={rgb, 255:red, 208; green, 2; blue, 27 }  ,draw opacity=1 ]   (350,318) -- (350,368) ;

\draw (352.36,135.84) node [anchor=north west][inner sep=0.75pt]  [font=\scriptsize]  {$f$};
\draw (289.36,134.84) node [anchor=north west][inner sep=0.75pt]  [font=\scriptsize]  {$f$};
\draw (318.36,185.44) node [anchor=north west][inner sep=0.75pt]  [font=\scriptsize]  {$\varphi _{\downarrow }$};
\draw (318.21,137.5) node [anchor=north west][inner sep=0.75pt]  [font=\scriptsize]  {$U$};
\draw (317.96,84.64) node [anchor=north west][inner sep=0.75pt]  [font=\scriptsize]  {$\varphi _{\uparrow }$};
\draw (352.36,284.84) node [anchor=north west][inner sep=0.75pt]  [font=\scriptsize]  {$g$};
\draw (289.36,283.84) node [anchor=north west][inner sep=0.75pt]  [font=\scriptsize]  {$g$};
\draw (318.36,334.44) node [anchor=north west][inner sep=0.75pt]  [font=\scriptsize]  {$\psi _{\downarrow }$};
\draw (318.21,286.5) node [anchor=north west][inner sep=0.75pt]  [font=\scriptsize]  {$U$};
\draw (317.96,233.64) node [anchor=north west][inner sep=0.75pt]  [font=\scriptsize]  {$\psi _{\uparrow }$};

\end{tikzpicture}

\end{center}
Consider the middle square
\begin{center}

\tikzset{every picture/.style={line width=0.75pt}} 

\begin{tikzpicture}[x=0.75pt,y=0.75pt,yscale=-1,xscale=1]

\draw [color={rgb, 255:red, 0; green, 0; blue, 0 }  ,draw opacity=1 ]   (320,91) -- (320,141) ;
\draw [color={rgb, 255:red, 0; green, 0; blue, 0 }  ,draw opacity=1 ]   (370,91) -- (370,141) ;
\draw [color={rgb, 255:red, 208; green, 2; blue, 27 }  ,draw opacity=1 ]   (320,141) -- (370,141) ;
\draw [color={rgb, 255:red, 0; green, 0; blue, 0 }  ,draw opacity=1 ]   (320,190.6) -- (370,190.6) ;
\draw [color={rgb, 255:red, 208; green, 2; blue, 27 }  ,draw opacity=1 ]   (320,140.6) -- (320,190.6) ;
\draw [color={rgb, 255:red, 208; green, 2; blue, 27 }  ,draw opacity=1 ]   (370,140.6) -- (370,190.6) ;
\draw [color={rgb, 255:red, 208; green, 2; blue, 27 }  ,draw opacity=1 ]   (320,240) -- (370,240) ;
\draw [color={rgb, 255:red, 208; green, 2; blue, 27 }  ,draw opacity=1 ]   (320,190) -- (320,240) ;
\draw [color={rgb, 255:red, 208; green, 2; blue, 27 }  ,draw opacity=1 ]   (370,190) -- (370,240) ;
\draw [color={rgb, 255:red, 208; green, 2; blue, 27 }  ,draw opacity=1 ]   (320,91) -- (370,91) ;

\draw (372.36,107.44) node [anchor=north west][inner sep=0.75pt]  [font=\scriptsize]  {$g$};
\draw (309.36,106.44) node [anchor=north west][inner sep=0.75pt]  [font=\scriptsize]  {$g$};
\draw (338.36,157.04) node [anchor=north west][inner sep=0.75pt]  [font=\scriptsize]  {$\varphi _{\downarrow }$};
\draw (338.21,109.1) node [anchor=north west][inner sep=0.75pt]  [font=\scriptsize]  {$U$};
\draw (337.96,205.24) node [anchor=north west][inner sep=0.75pt]  [font=\scriptsize]  {$\psi _{\uparrow }$};

\end{tikzpicture}

\end{center}
By the condition that $C$ induces $\Phi$ his is equal, in $C$, to the square
\begin{center}

\tikzset{every picture/.style={line width=0.75pt}} 

\begin{tikzpicture}[x=0.75pt,y=0.75pt,yscale=-1,xscale=1]

\draw [color={rgb, 255:red, 0; green, 0; blue, 0 }  ,draw opacity=1 ]   (340,111) -- (340,161) ;
\draw [color={rgb, 255:red, 0; green, 0; blue, 0 }  ,draw opacity=1 ]   (390,111) -- (390,161) ;
\draw [color={rgb, 255:red, 208; green, 2; blue, 27 }  ,draw opacity=1 ]   (340,161) -- (390,161) ;
\draw [color={rgb, 255:red, 208; green, 2; blue, 27 }  ,draw opacity=1 ]   (340,111) -- (390,111) ;
\draw [color={rgb, 255:red, 208; green, 2; blue, 27 }  ,draw opacity=1 ]   (340,60.67) -- (340,110.67) ;
\draw [color={rgb, 255:red, 208; green, 2; blue, 27 }  ,draw opacity=1 ]   (390,60.67) -- (390,110.67) ;
\draw [color={rgb, 255:red, 208; green, 2; blue, 27 }  ,draw opacity=1 ]   (340,60.67) -- (390,60.67) ;

\draw (392.36,127.44) node [anchor=north west][inner sep=0.75pt]  [font=\scriptsize]  {$g$};
\draw (329.36,126.44) node [anchor=north west][inner sep=0.75pt]  [font=\scriptsize]  {$g$};
\draw (358.21,129.1) node [anchor=north west][inner sep=0.75pt]  [font=\scriptsize]  {$U$};
\draw (340.63,80.57) node [anchor=north west][inner sep=0.75pt]  [font=\tiny]  {$\Phi _{g}( \psi _{\uparrow } \boxminus \varphi _{\downarrow })$};

\end{tikzpicture}

\end{center}
Substituting, the composite $!(\psi)\boxminus !(\psi)$ is equal to the square
\begin{center}

\tikzset{every picture/.style={line width=0.75pt}} 

\begin{tikzpicture}[x=0.75pt,y=0.75pt,yscale=-1,xscale=1]

\draw [color={rgb, 255:red, 0; green, 0; blue, 0 }  ,draw opacity=1 ]   (282.1,201.35) -- (332.1,201.35) ;
\draw [color={rgb, 255:red, 208; green, 2; blue, 27 }  ,draw opacity=1 ]   (282.1,201.35) -- (282.1,251.35) ;
\draw [color={rgb, 255:red, 208; green, 2; blue, 27 }  ,draw opacity=1 ]   (332.1,201.35) -- (332.1,251.35) ;
\draw [color={rgb, 255:red, 0; green, 0; blue, 0 }  ,draw opacity=1 ]   (282,301.4) -- (282,351.4) ;
\draw [color={rgb, 255:red, 0; green, 0; blue, 0 }  ,draw opacity=1 ]   (332,301.4) -- (332,351.4) ;
\draw [color={rgb, 255:red, 208; green, 2; blue, 27 }  ,draw opacity=1 ]   (282,351.4) -- (332,351.4) ;
\draw [color={rgb, 255:red, 0; green, 0; blue, 0 }  ,draw opacity=1 ]   (282,401) -- (332,401) ;
\draw [color={rgb, 255:red, 208; green, 2; blue, 27 }  ,draw opacity=1 ]   (282,351) -- (282,401) ;
\draw [color={rgb, 255:red, 208; green, 2; blue, 27 }  ,draw opacity=1 ]   (332,351) -- (332,401) ;
\draw [color={rgb, 255:red, 208; green, 2; blue, 27 }  ,draw opacity=1 ]   (282.17,301.27) -- (332.17,301.27) ;
\draw [color={rgb, 255:red, 208; green, 2; blue, 27 }  ,draw opacity=1 ]   (282.17,251.27) -- (282.17,301.27) ;
\draw [color={rgb, 255:red, 208; green, 2; blue, 27 }  ,draw opacity=1 ]   (332.17,251.27) -- (332.17,301.27) ;
\draw [color={rgb, 255:red, 208; green, 2; blue, 27 }  ,draw opacity=1 ]   (282.17,251.27) -- (332.17,251.27) ;

\draw (300.06,216.59) node [anchor=north west][inner sep=0.75pt]  [font=\scriptsize]  {$\varphi _{\uparrow }$};
\draw (335.36,317.84) node [anchor=north west][inner sep=0.75pt]  [font=\scriptsize]  {$gf$};
\draw (266.36,316.84) node [anchor=north west][inner sep=0.75pt]  [font=\scriptsize]  {$gf$};
\draw (300.36,367.44) node [anchor=north west][inner sep=0.75pt]  [font=\scriptsize]  {$\psi _{\downarrow }$};
\draw (300.21,319.5) node [anchor=north west][inner sep=0.75pt]  [font=\scriptsize]  {$U$};
\draw (283.8,271.17) node [anchor=north west][inner sep=0.75pt]  [font=\tiny]  {$\Phi _{g}( \psi _{\uparrow } \boxminus \varphi _{\downarrow })$};

\end{tikzpicture}

\end{center}
This is precisely the square $!(\varphi\boxminus\psi)$. The double functor $!$ is thus compatible with vertical composition, and is thus a strict double functor $!:\crossb\to C$. Moreover, $!$ is clearly the unique double functor $\crossb\to C$ satisfying the equation $H^\ast q=id_{\decB}$. We conclude that $\crossb$ is an initial objec in $\overline{\dcat}_\Phi$. Finally, $!$ is clearly surjective on globular squares and horizontal identity squares of $C$. The fact that every square in $\gamma C$ admits a subdivision into globular and horizontal identity squares, implies that $!$ is surjective on squares. This concludes the proof.
\end{proof}
\noindent We interpret the conditions on the morphism $!$ in the statement of Theorem \ref{thm:initial} as the condition that the double category $\crossb$ parametrizes the globularly generated piece of every internalization of $\decB$ inducing the $\pi_2$-opindexing $\Phi$, and we interpret the unique double functor $!$ as a kind of evaluation, suggestively represented as making dotted squares in canonical form into solid squares. Thinking about the conditions satisfied by $!$ in this way makes it clear that every globularly generated square in $C$ has the same 'shape' as a corresponding square in $\crossb$. Every square in $\crossb$ is canonical and thus every square in $C$ is also canonical. From this and from \cite[Lemma 3.9]{OrendainMaldonado} we obtain the following.
\begin{cor}\label{cor:mainthm}
Let $\decB$ be a decorated bicategory. Let $\Phi$ be a $\pi_2$-opindexing/$\pi_2$-indexing on $\decB$. If $C$ is a double category inducing $\Phi$, then $\ell C=1$.
\end{cor}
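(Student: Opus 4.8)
The plan is to read the corollary off the universal property established in Theorem \ref{thm:initial}, combined with the fact that every square of $\crossb$ is canonical. Since $C$ is a double category inducing $\Phi$, Theorem \ref{thm:initial} supplies a strict double functor $!\colon\crossb\to C$ with $H^\ast !=\id_{\decB}$ which is surjective on the globularly generated squares of $C$. I would use this $!$ to transport the canonical shape of squares in $\crossb$ onto every globularly generated square of $C$, and then invoke \cite[Lemma 3.9]{OrendainMaldonado} to pass from ``every square is canonical'' to the numerical statement $\ell C=1$.

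First I would record that, by the very construction of $\crossb$ reviewed in Subsection \ref{ss:piindexings}, every nonglobular square is a triple $(\varphi_\downarrow,f,\varphi_\uparrow)$ which is by definition the canonical square $\varphi_\downarrow\boxminus U_f\boxminus\varphi_\uparrow$, while every globular square is trivially canonical; hence every square of $\crossb$ admits a canonical decomposition. The second step is to check that $!$ sends canonical squares to canonical squares. This is precisely where strictness of $!$ is used: because $!$ preserves vertical composition, is the identity on vertical morphisms and globular squares (as $H^\ast !=\id_{\decB}$), and carries unit squares to unit squares (as a strict double functor preserves horizontal identities), the image $!\bigl(\varphi_\downarrow\boxminus U_f\boxminus\varphi_\uparrow\bigr)$ equals $\varphi_\downarrow\boxminus U_f\boxminus\varphi_\uparrow$ in $C$, which is again a canonical decomposition. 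Thus $!$ reproduces the canonical form verbatim inside $C$.

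The third step combines surjectivity with the previous two. Since $!$ is surjective on the squares of the globularly generated piece $\gamma C$, every globularly generated square of $C$ is the image under $!$ of some square of $\crossb$, hence—by the preceding paragraph—is canonical; in particular every length $1$ square of $C$ is canonical. At this point I would cite \cite[Lemma 3.9]{OrendainMaldonado}, which states exactly that if every length $1$ square of a double category is canonical then the double category is of length $1$, to conclude $\ell C=1$. The $\pi_2$-indexing case is handled identically, appealing to the analogue of Theorem \ref{thm:initial} furnished by the symmetry noted at the start of Section \ref{s:inducingpi2index}.

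I expect the only real (and quite modest) obstacle to be the second step, namely a clean verification that a strict double functor fixing the globular data preserves canonical form; everything else is bookkeeping. It is worth emphasizing that the subtlety stressed in Subsection \ref{ss:lengths}—that length $1$ decompositions of two horizontally composable squares need not be compatible, so that individual length $1$ squares do not obviously compose to length $1$ squares—is exactly what \cite[Lemma 3.9]{OrendainMaldonado} absorbs. Delegating the final passage to that lemma is therefore what makes the conclusion ``$\ell C=1$'' rigorous rather than merely the tautology that canonical squares are length $1$.
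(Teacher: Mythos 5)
Your proposal is correct and follows essentially the same route as the paper: both deduce the corollary from Theorem \ref{thm:initial} by noting that every square of $\crossb$ is canonical by construction, that the strict double functor $!$ (being the identity on globular data and preserving units and vertical composition) transports canonical decompositions into $C$, and that surjectivity of $!$ on globularly generated squares makes every length $1$ square of $C$ canonical, so that \cite[Lemma 3.9]{OrendainMaldonado} yields $\ell C=1$. The only difference is presentational: you spell out the preservation-of-canonical-form step explicitly, whereas the paper treats it as immediate from its pictorial interpretation of $!$ as an evaluation turning dotted canonical squares into solid ones.
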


\section{Fully faithful and absolutely dense framed bicategories}\label{s:fully faithful}
\noindent In this section we define the conditions of a framed bicategory being fully faithful and absolutely dense. Our main application of Theorem \ref{thm:initial} and Corollary \ref{cor:mainthm} will be to describe the structure of fully faithful and absolutely dense framed bicategories and in particular to prove that they are of length 1. Fully faithful/absolutely dense framed bicategories are framed bicategories for which every vertical arrow is fully faithful/absolutely dense, in the sense of \cite[Definition 4.12]{Roald} and \cite[Definition 8.3]{ShulmanCruttwell}, where the notions are defined for virtual equipments. We recall the corresponding definitions.

\begin{definition}\label{def:regularframed}
    Let $C$ be a framed bicategory. We will say that a vertical morphism $f\in C_0$ in $C$, is \textbf{fully faithful} if the horizontal identity square
    \begin{center}

\tikzset{every picture/.style={line width=0.75pt}} 

\begin{tikzpicture}[x=0.75pt,y=0.75pt,yscale=-1,xscale=1]

\draw [color={rgb, 255:red, 208; green, 2; blue, 27 }  ,draw opacity=1 ]   (281,180.6) -- (331,180.6) ;
\draw [color={rgb, 255:red, 208; green, 2; blue, 27 }  ,draw opacity=1 ]   (281,230.6) -- (331,230.6) ;
\draw [color={rgb, 255:red, 0; green, 0; blue, 0 }  ,draw opacity=1 ]   (281,180.6) -- (281,230.6) ;
\draw [color={rgb, 255:red, 0; green, 0; blue, 0 }  ,draw opacity=1 ]   (331,180.6) -- (331,230.6) ;

\draw (267.14,201.26) node [anchor=north west][inner sep=0.75pt]  [font=\tiny]  {$f$};
\draw (337.71,201.26) node [anchor=north west][inner sep=0.75pt]  [font=\tiny]  {$f$};
\draw (300.29,202.11) node [anchor=north west][inner sep=0.75pt]  [font=\tiny]  {$U$};

\end{tikzpicture}

    \end{center}
    \noindent is cartesian. If the horizontal identity square above is opcartesian, we will say that $f$ is \textbf{absolutely dense}. We will say that the framed bicategory $C$ is \textbf{fully faithful/absolutely dense}, if every morphism $f$ in $C_0$ is fully faithful/absolutely dense. 
\end{definition}
\noindent In order to keep the exposition of the paper short, we will mostly treat fully faithful framed bicategories, but the arguments and results for absolutely dense framed bicategories are analogous. The interest on fully faithful framed bicategories mainly stems from the following observation. Compare with the observations made in Section \ref{ss:framed}.

\begin{obs}\label{obs:SquaresRegFramed}
\noindent In a fully faithful framed bicategory, every square of the form
\begin{center}

\tikzset{every picture/.style={line width=0.75pt}} 

\begin{tikzpicture}[x=0.75pt,y=0.75pt,yscale=-1,xscale=1]

\draw [color={rgb, 255:red, 208; green, 2; blue, 27 }  ,draw opacity=1 ]   (281,199.6) -- (331,199.6) ;
\draw [color={rgb, 255:red, 0; green, 0; blue, 0 }  ,draw opacity=1 ]   (281,149.6) -- (281,199.6) ;
\draw [color={rgb, 255:red, 0; green, 0; blue, 0 }  ,draw opacity=1 ]   (331,149.6) -- (331,199.6) ;
\draw [color={rgb, 255:red, 0; green, 0; blue, 0 }  ,draw opacity=1 ]   (281,149.6) -- (331,149.6) ;

\draw (261.6,167) node [anchor=north west][inner sep=0.75pt]  [font=\scriptsize]  {$hf$};
\draw (337.6,167.2) node [anchor=north west][inner sep=0.75pt]  [font=\scriptsize]  {$kf$};
\draw (301.47,132.73) node [anchor=north west][inner sep=0.75pt]  [font=\scriptsize]  {$\beta $};
\draw (298.61,169.3) node [anchor=north west][inner sep=0.75pt]  [font=\scriptsize]  {$\Phi $};

\end{tikzpicture}

\end{center}
\noindent can be factored uniquely as
\begin{center}

\tikzset{every picture/.style={line width=0.75pt}} 

\begin{tikzpicture}[x=0.75pt,y=0.75pt,yscale=-1,xscale=1]

\draw [color={rgb, 255:red, 208; green, 2; blue, 27 }  ,draw opacity=1 ]   (301,219.6) -- (351,219.6) ;
\draw [color={rgb, 255:red, 0; green, 0; blue, 0 }  ,draw opacity=1 ]   (301,169.6) -- (301,219.6) ;
\draw [color={rgb, 255:red, 0; green, 0; blue, 0 }  ,draw opacity=1 ]   (351,169.6) -- (351,219.6) ;
\draw [color={rgb, 255:red, 208; green, 2; blue, 27 }  ,draw opacity=1 ]   (301,169.6) -- (351,169.6) ;
\draw [color={rgb, 255:red, 0; green, 0; blue, 0 }  ,draw opacity=1 ]   (301.02,119.65) -- (301.02,169.65) ;
\draw [color={rgb, 255:red, 0; green, 0; blue, 0 }  ,draw opacity=1 ]   (351.02,119.65) -- (351.02,169.65) ;
\draw [color={rgb, 255:red, 0; green, 0; blue, 0 }  ,draw opacity=1 ]   (301.02,119.65) -- (351.02,119.65) ;

\draw (285.14,187.31) node [anchor=north west][inner sep=0.75pt]  [font=\scriptsize]  {$f$};
\draw (357.6,187.2) node [anchor=north west][inner sep=0.75pt]  [font=\scriptsize]  {$f$};
\draw (320.85,103.19) node [anchor=north west][inner sep=0.75pt]  [font=\scriptsize]  {$\beta $};
\draw (284.22,139.02) node [anchor=north west][inner sep=0.75pt]  [font=\scriptsize]  {$h$};
\draw (356.52,138.25) node [anchor=north west][inner sep=0.75pt]  [font=\scriptsize]  {$k$};
\draw (318.24,139.78) node [anchor=north west][inner sep=0.75pt]  [font=\scriptsize]  {$\Psi $};
\draw (319.24,190.78) node [anchor=north west][inner sep=0.75pt]  [font=\scriptsize]  {$U$};

\end{tikzpicture}

\end{center}
\noindent In particular, every square of the form
\begin{center}

\tikzset{every picture/.style={line width=0.75pt}} 

\begin{tikzpicture}[x=0.75pt,y=0.75pt,yscale=-1,xscale=1]

\draw [color={rgb, 255:red, 208; green, 2; blue, 27 }  ,draw opacity=1 ]   (301,219.6) -- (351,219.6) ;
\draw [color={rgb, 255:red, 0; green, 0; blue, 0 }  ,draw opacity=1 ]   (301,169.6) -- (301,219.6) ;
\draw [color={rgb, 255:red, 0; green, 0; blue, 0 }  ,draw opacity=1 ]   (351,169.6) -- (351,219.6) ;
\draw [color={rgb, 255:red, 0; green, 0; blue, 0 }  ,draw opacity=1 ]   (301,169.6) -- (351,169.6) ;

\draw (284.6,187) node [anchor=north west][inner sep=0.75pt]  [font=\scriptsize]  {$f$};
\draw (357.6,187.2) node [anchor=north west][inner sep=0.75pt]  [font=\scriptsize]  {$f$};
\draw (321.47,152.73) node [anchor=north west][inner sep=0.75pt]  [font=\scriptsize]  {$\beta $};
\draw (318.61,189.3) node [anchor=north west][inner sep=0.75pt]  [font=\scriptsize]  {$\Phi $};

\end{tikzpicture}

\end{center}
\noindent admits a unique representation as a vertical composite of the form
\begin{center}

\tikzset{every picture/.style={line width=0.75pt}} 

\begin{tikzpicture}[x=0.75pt,y=0.75pt,yscale=-1,xscale=1]

\draw [color={rgb, 255:red, 208; green, 2; blue, 27 }  ,draw opacity=1 ]   (321,239.6) -- (371,239.6) ;
\draw [color={rgb, 255:red, 0; green, 0; blue, 0 }  ,draw opacity=1 ]   (321,189.6) -- (321,239.6) ;
\draw [color={rgb, 255:red, 0; green, 0; blue, 0 }  ,draw opacity=1 ]   (371,189.6) -- (371,239.6) ;
\draw [color={rgb, 255:red, 208; green, 2; blue, 27 }  ,draw opacity=1 ]   (321,189.6) -- (371,189.6) ;
\draw [color={rgb, 255:red, 208; green, 2; blue, 27 }  ,draw opacity=1 ]   (321.02,139.65) -- (321.02,189.65) ;
\draw [color={rgb, 255:red, 208; green, 2; blue, 27 }  ,draw opacity=1 ]   (371.02,139.65) -- (371.02,189.65) ;
\draw [color={rgb, 255:red, 0; green, 0; blue, 0 }  ,draw opacity=1 ]   (321.02,139.65) -- (371.02,139.65) ;

\draw (305.14,207.31) node [anchor=north west][inner sep=0.75pt]  [font=\scriptsize]  {$f$};
\draw (377.6,207.2) node [anchor=north west][inner sep=0.75pt]  [font=\scriptsize]  {$f$};
\draw (340.85,123.19) node [anchor=north west][inner sep=0.75pt]  [font=\scriptsize]  {$\beta $};
\draw (338.24,159.78) node [anchor=north west][inner sep=0.75pt]  [font=\scriptsize]  {$\Psi $};
\draw (339.24,210.78) node [anchor=north west][inner sep=0.75pt]  [font=\scriptsize]  {$U$};

\end{tikzpicture}

\end{center}
\end{obs}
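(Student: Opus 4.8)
The plan is to observe that the entire statement is an immediate instance of the universal property of cartesian squares recalled in Section~\ref{ss:framed}, once the defining condition of fully faithfulness is unpacked. By Definition~\ref{def:regularframed}, to say that $f$ is fully faithful is precisely to say that the unit square $U_f$ is cartesian. Thus the first factorization to be established is nothing other than the universal factorization through the cartesian square $U_f$, and no new input beyond the already-recalled property is required.

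Concretely, I would apply the factorization property of Section~\ref{ss:framed} to the cartesian square $U_f$, taking both vertical legs of that cartesian square to be $f$ and its vertical codomain (bottom boundary) to be the relevant horizontal identity. A square with left and right frames $hf$ and $kf$, vertical codomain equal to that same horizontal identity, and vertical domain $\beta$ then factors uniquely as $U_f \boxminus \Psi$, where $\Psi$ sits on top with frames $h$ and $k$, vertical domain $\beta$, and vertical codomain equal to the vertical domain of $U_f$ --- which is again a horizontal identity, since $U_f$ is a unit square. This is exactly the first asserted factorization, and its uniqueness is inherited verbatim from the uniqueness clause of the cartesian factorization.

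For the second, \emph{``in particular''}, assertion I would simply specialize the first to $h=k=\id$. Then $hf=kf=f$, the frames of $\Psi$ collapse to identities, so $\Psi$ becomes a globular square with vertical domain $\beta$ and horizontal-identity vertical codomain, and the decomposition reduces to the displayed vertical composite. The only point that needs care --- and it is bookkeeping rather than a genuine obstacle --- is verifying that the horizontal-identity edges match across the two squares of the composite, i.e.\ that the vertical codomain of $\Psi$ coincides with the vertical domain of $U_f$; this is automatic because $U_f$ is a unit square and its two horizontal boundaries are the horizontal identities on the source and target of $f$. I therefore expect the whole argument to be a direct corollary of the recalled cartesian factorization, with the hypothesis ``$f$ fully faithful'' used only to guarantee that $U_f$ is cartesian in the first place.
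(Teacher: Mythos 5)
Your proposal is correct and matches the paper's intended argument: the paper treats this as an immediate consequence of the cartesian factorization property recalled in Section~\ref{ss:framed}, applied to the unit square $U_f$, which is cartesian precisely by the definition of full faithfulness, with the niche taken over the horizontal identity and the ``in particular'' clause obtained by setting $h=k=\id$ so that $\Psi$ is globular. Your handling of the uniqueness clause and of the matching of horizontal-identity boundaries is exactly the bookkeeping the paper leaves implicit.
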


\begin{ex}\label{ex:fullyfaithful}
In the framed bicategory $\Prof$ of categories, functors, profunctors, and natural transformations a vertical arrow $F:C\to D$ is fully faithful/absolutely dense, precisely when $F$ is a fully faithful/absolutely dense functor. In the framed bicategory $\Span$ of spans over $\Set$, a vertical morphism $f:X\to Y$ is fully faithful/absolutely dense when $f$ is injective/surjective. Likewise, in the framed bicategory $\Rel$ of relations, a vertical morphism $f$ is fully faithful/absolutely dense when $f$ is injective/surjective. The framed bicategories $\Prof, \Span, \Rel$ are thus not fully faithful nor absolutely dense, nor is the framed bicategory of bimodules $\Mod (K)$ of a framed bicategory $K$ in general, see \cite[Section 8]{ShulmanCruttwell}.   
\end{ex}
\noindent Although Example \ref{ex:fullyfaithful} might be discouraging in the search for example of fully faithful/absolutely dense framed bicategories, the next example says that every framed bicategory naturally contains a fully faithful and absolutely dense framed sub-bicategory. 

\begin{obs}\label{obs:regularinvertible}
    Let $C$ be a framed bicategory. Every vertical isomorphism $f$ in $C$ is fully faithful and absolutely dense, see \cite[Example 4.13]{Roald}. In particular, if $C_0$ is a groupoid, then $C$ is fully faithful and absolutely dense. Given a framed bicategory $C$, $C^\ast$ denotes the sub-double category of $C$ whose squares are
\begin{center}

\tikzset{every picture/.style={line width=0.75pt}} 

\begin{tikzpicture}[x=0.75pt,y=0.75pt,yscale=-1,xscale=1]

\draw [color={rgb, 255:red, 0; green, 0; blue, 0 }  ,draw opacity=1 ]   (301,200.6) -- (351,200.6) ;
\draw [color={rgb, 255:red, 0; green, 0; blue, 0 }  ,draw opacity=1 ]   (301,250.6) -- (351,250.6) ;
\draw [color={rgb, 255:red, 0; green, 0; blue, 0 }  ,draw opacity=1 ]   (301,200.6) -- (301,250.6) ;
\draw [color={rgb, 255:red, 0; green, 0; blue, 0 }  ,draw opacity=1 ]   (351,200.6) -- (351,250.6) ;

\draw (288.14,221.26) node [anchor=north west][inner sep=0.75pt]  [font=\tiny]  {$f$};
\draw (357.71,221.26) node [anchor=north west][inner sep=0.75pt]  [font=\tiny]  {$g$};
\draw (320.29,222.11) node [anchor=north west][inner sep=0.75pt]  [font=\tiny]  {$\varphi$};
\draw (322.21,257.26) node [anchor=north west][inner sep=0.75pt]  [font=\tiny]  {$\beta $};
\draw (319.96,188.76) node [anchor=north west][inner sep=0.75pt]  [font=\tiny]  {$\alpha $};

\end{tikzpicture}

\end{center}
\noindent where the left and right frames $f,g$ are vertical isomorphisms. Thus defined $C^\ast$ is a fully faithful and absolutely dense sub-framed bicategory of $C$. The framed bicategories $\Prof^\ast, \Span^\ast, \Rel^\ast$ are thus fully faithful and absolutely continuous. The framed bicategory $\Bord$ of bordisms is thus also fully faithful and absolutely continuous.
\end{obs}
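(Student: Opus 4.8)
The plan is to prove the four claims of the observation in the stated order, reducing everything to a single fibration-theoretic fact: in any bifibration, every morphism of the total category that happens to be an isomorphism is automatically cartesian, and dually opcartesian. First I would treat a vertical isomorphism $f\colon a\to b$ in $C$. Since the unit functor $U\colon C_0\to C_1$ preserves composition and identities, and $f$ is invertible, the unit square $U_f$ is an isomorphism in the category $C_1$, with inverse the unit square $U_{f^{-1}}$. Because $L\times R\colon C_1\to C_0^2$ is a bifibration and $U_f$ lies over the isomorphism $(f,f)$ of $C_0^2$, this isomorphism $U_f$ is both cartesian and opcartesian. By Definition \ref{def:regularframed} this says precisely that $f$ is fully faithful and absolutely dense, recovering \cite[Example 4.13]{Roald}.

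The ``in particular'' clause is then immediate: if $C_0$ is a groupoid every vertical arrow is an isomorphism, so by the previous paragraph every vertical arrow is fully faithful and absolutely dense, whence $C$ is fully faithful and absolutely dense.

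Next I would analyse $C^\ast$. Its objects are those of $C$ and its vertical arrows are the vertical isomorphisms of $C$, so $(C^\ast)_0$ is the core groupoid of $C_0$; closure under horizontal and vertical composition and under units is routine, since isomorphisms compose and contain all identities, so $C^\ast$ is a sub-double category. The substantive point---and the step I expect to be the main obstacle---is to check that $C^\ast$ is again a framed bicategory, i.e.\ that the normal cloven bifibration of $C$ restricts to one on $C^\ast$. Given a niche in $C^\ast$ with invertible frames $f,g$ and bottom $\alpha$, the cartesian filler chosen in $C$ has frames $f,g$ and hence already lies in $C^\ast$; what must be verified is that it remains cartesian \emph{there}. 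For this the key observation is that if $fh$ is invertible and $f$ is invertible then $h=f^{-1}(fh)$ is invertible: thus whenever a test square in the universal property recalled in Section \ref{ss:framed} has frames $fh,gk$ lying in $C^\ast$, the unique factoring square $\Psi$ automatically has invertible frames $h,k$ and so lies in $C^\ast$ as well. Restricting the class of admissible test squares only shrinks it, so the universal property is inherited; the dual argument handles opcartesianness, and normality of the restricted cleavage is inherited from that of $C$. Hence $C^\ast$ is a framed bicategory all of whose vertical arrows are isomorphisms, and applying the first paragraph inside $C^\ast$ shows that each unit square is cartesian and opcartesian in $C^\ast$, so $C^\ast$ is fully faithful and absolutely dense.

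Finally, the examples follow formally. The framed bicategories $\Prof^\ast,\Span^\ast,\Rel^\ast$ are instances of the $(-)^\ast$ construction, hence fully faithful and absolutely dense by the previous paragraph. For $\Bord$ I would instead invoke the groupoid clause: the category of objects and vertical arrows of $\Bord$ is a groupoid, its vertical arrows being diffeomorphisms, so $\Bord$ is fully faithful and absolutely dense directly by the second paragraph.
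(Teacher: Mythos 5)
Your proof is correct. The paper gives no argument for this observation beyond the citation to Roald's Example 4.13, and the route you take is exactly the standard fact that citation encapsulates: for any functor (bifibration or not), an isomorphism in the total category lying over an isomorphism is automatically cartesian and opcartesian, applied here to $U_f$ over $(f,f)$ with inverse $U_{f^{-1}}$. The one step you rightly flag as substantive --- that the cartesian/opcartesian fillers of $C$ restrict to cartesian/opcartesian fillers of $C^\ast$ because invertibility of $f$ and $fh$ forces invertibility of $h$, so the unique factoring square stays in $C^\ast$ --- is handled correctly and is the only detail the paper leaves entirely implicit.
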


\noindent The following observation does better in embedding a fully faithful/absolutely dense framed bicategory canonically in an framed bicategory $C$.

\begin{obs}\label{obs:fullyfaithful}
    Let $C$ be a framed bicategory. The vertical composition of fully faithful vertical morphisms is again fully faithful. If we write $\tilde{C}$ for the sub-double category of $C$ formed by squares of the form
    \begin{center}

\tikzset{every picture/.style={line width=0.75pt}} 

\begin{tikzpicture}[x=0.75pt,y=0.75pt,yscale=-1,xscale=1]

\draw [color={rgb, 255:red, 0; green, 0; blue, 0 }  ,draw opacity=1 ]   (260.33,128.63) -- (310.33,128.63) ;
\draw [color={rgb, 255:red, 0; green, 0; blue, 0 }  ,draw opacity=1 ]   (260.33,128.63) -- (260.33,178.63) ;
\draw [color={rgb, 255:red, 0; green, 0; blue, 0 }  ,draw opacity=1 ]   (310.33,128.63) -- (310.33,178.63) ;
\draw [color={rgb, 255:red, 0; green, 0; blue, 0 }  ,draw opacity=1 ]   (260.33,178.63) -- (310.33,178.63) ;

\draw (281.03,112.73) node [anchor=north west][inner sep=0.75pt]  [font=\scriptsize]  {$\alpha $};
\draw (315.69,146.43) node [anchor=north west][inner sep=0.75pt]  [font=\scriptsize]  {$g$};
\draw (247.09,146.03) node [anchor=north west][inner sep=0.75pt]  [font=\scriptsize]  {$f$};
\draw (280.63,183.13) node [anchor=north west][inner sep=0.75pt]  [font=\scriptsize]  {$\beta $};
\draw (280.23,146.73) node [anchor=north west][inner sep=0.75pt]  [font=\scriptsize]  {$\varphi $};

\end{tikzpicture}

    \end{center}
    where $f,g$ are fully faithful, then $\tilde{C}$ is a fully faithful framed bicategory containing $C^\ast$. Similarly, the sub-double category $\hat{C}$ of $C$ formed by squares of the form
    \begin{center}

\tikzset{every picture/.style={line width=0.75pt}} 

\begin{tikzpicture}[x=0.75pt,y=0.75pt,yscale=-1,xscale=1]

\draw [color={rgb, 255:red, 0; green, 0; blue, 0 }  ,draw opacity=1 ]   (260.33,128.63) -- (310.33,128.63) ;
\draw [color={rgb, 255:red, 0; green, 0; blue, 0 }  ,draw opacity=1 ]   (260.33,128.63) -- (260.33,178.63) ;
\draw [color={rgb, 255:red, 0; green, 0; blue, 0 }  ,draw opacity=1 ]   (310.33,128.63) -- (310.33,178.63) ;
\draw [color={rgb, 255:red, 0; green, 0; blue, 0 }  ,draw opacity=1 ]   (260.33,178.63) -- (310.33,178.63) ;

\draw (281.03,112.73) node [anchor=north west][inner sep=0.75pt]  [font=\scriptsize]  {$\alpha $};
\draw (315.69,146.43) node [anchor=north west][inner sep=0.75pt]  [font=\scriptsize]  {$g$};
\draw (247.09,146.03) node [anchor=north west][inner sep=0.75pt]  [font=\scriptsize]  {$f$};
\draw (280.63,183.13) node [anchor=north west][inner sep=0.75pt]  [font=\scriptsize]  {$\beta $};
\draw (280.23,146.73) node [anchor=north west][inner sep=0.75pt]  [font=\scriptsize]  {$\varphi $};

\end{tikzpicture}

    \end{center}
    where $f,g$ are now absolutely dense, is an absolutely dense framed bicategory, also containing $C^\ast$. 
\end{obs}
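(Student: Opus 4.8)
The plan is to isolate the purely structural fact that underlies all three assertions: for any \emph{wide} subcategory $\mathcal{W}$ of $C_0$ (one containing every object and closed under composition and identities), the sub-double-category $C_{\mathcal W}\subseteq C$ consisting of all horizontal $1$-cells together with all squares whose left and right frames both lie in $\mathcal{W}$ is again a normal, cloven framed bicategory, with restriction and extension inherited from $C$. Granting this, full faithfulness and absolute density enter only through the observation that the fully faithful morphisms, and separately the absolutely dense morphisms, each form such a wide subcategory, and that the resulting $C_{\mathcal W}$ then has all vertical arrows fully faithful, respectively absolutely dense, by construction.

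First I would prove the composition statement. Since the unit functor $U\colon C_0\to C_1$ preserves composition, $U_{gf}$ is the vertical composite $U_g\boxminus U_f$ of the two unit squares. If $f,g$ are fully faithful then $U_f,U_g$ are cartesian morphisms for the fibration $L\times R\colon C_1\to C_0^2$, and a composite of cartesian morphisms is cartesian, so $U_{gf}$ is cartesian and $gf$ is fully faithful; moreover $U_{\mathrm{id}_a}=\mathrm{id}_{U_a}$ is trivially cartesian, so $\mathrm{id}_a$ is fully faithful. Hence the fully faithful morphisms form a wide subcategory $\mathcal{W}_{ff}\subseteq C_0$, and dually, using that a composite of opcartesian morphisms is opcartesian, the absolutely dense morphisms form a wide subcategory $\mathcal{W}_{ad}$.

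Next I would verify that $C_{\mathcal W}$ is a sub-double-category: every horizontal $1$-cell occurs (its vertical identity square has identity frames), the unit square $U_f$ of an $f\in\mathcal W$ has frames in $\mathcal W$, and both $\boxminus$ and $\boxvert$ preserve the condition ``outer frames in $\mathcal W$'' since frames compose and $\mathcal W$ is closed under composition. The heart of the argument is that the bifibration structure restricts. Given a niche in $C_{\mathcal W}$ with bottom $\alpha$ and frames $f,g\in\mathcal W$, the cartesian square $f^\ast\alpha g^\ast$ of $C$ already has frames $f,g\in\mathcal W$, so it lies in $C_{\mathcal W}$, and it remains cartesian there for the following reason: the cartesian universal property inside $C_{\mathcal W}$ is tested only against squares $\psi$ of $C_{\mathcal W}$ together with factorizations of their frames through $(f,g)$ by a pair $(h,k)$ of \emph{base} morphisms of $C_{\mathcal W}$, i.e. with $h,k\in\mathcal W$; for any such datum the unique factoring square $\Psi$ produced by $C$ has frames $(h,k)$, hence already lies in $C_{\mathcal W}$, and uniqueness is inherited from $C$. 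The same argument dualized shows that the opcartesian extension squares of $C$ with frames in $\mathcal W$ serve as opcartesian squares in $C_{\mathcal W}$. Thus $L\times R$ restricts to a (normal, cloven) bifibration on $C_{\mathcal W}$, so $C_{\mathcal W}$ is a framed bicategory.

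Finally, taking $\mathcal W=\mathcal W_{ff}$ yields $\tilde C=C_{\mathcal W_{ff}}$, a framed bicategory all of whose vertical arrows are fully faithful, that is, a fully faithful framed bicategory; taking $\mathcal W=\mathcal W_{ad}$ yields the absolutely dense framed bicategory $\hat C=C_{\mathcal W_{ad}}$. Since every vertical isomorphism is both fully faithful and absolutely dense (Observation \ref{obs:regularinvertible}), the frames of every square of $C^\ast$ lie in both $\mathcal W_{ff}$ and $\mathcal W_{ad}$, so $C^\ast$ is contained in each of $\tilde C$ and $\hat C$. The point I expect to require the most care is the restriction of the bifibration, specifically the observation that the relevant universal properties quantify only over the smaller base $(C_{\mathcal W})_0^{\,2}$, so that the factoring squares automatically have frames in $\mathcal W$ and thus stay inside $C_{\mathcal W}$; this is precisely what keeps the argument from ever needing the (generally false) ``wrong-way'' cancellation for fully faithful morphisms.
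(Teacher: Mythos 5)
Your proposal is correct. The paper states this result as an unproved Observation, so there is no argument in the paper to compare against; your write-up supplies exactly the justification that the paper leaves implicit, and it does so correctly. The two pillars of your argument are both standard and correctly deployed: (i) since $U\colon C_0\to C_1$ is a functor, $U_{gf}=U_g\boxminus U_f$, and (op)cartesian morphisms for the (op)fibration $L\times R\colon C_1\to C_0^2$ are closed under composition, which gives the wide subcategories $\mathcal{W}_{ff}$ and $\mathcal{W}_{ad}$; and (ii) a cloven normal bifibration restricts along any wide subcategory of the base, because the universal property in the restricted double category only quantifies over factorizations $(h,k)$ with $h,k\in\mathcal{W}$, so the factoring square produced in $C$ automatically has frames in $\mathcal{W}$ and uniqueness is inherited. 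You are also right to flag (ii) as the delicate point: it is what guarantees that a square cartesian in $C$ with frames in $\mathcal{W}$ stays cartesian in $C_{\mathcal{W}}$ --- in particular that each $U_f$ with $f\in\mathcal{W}_{ff}$ is cartesian \emph{in} $\tilde{C}$, so that $\tilde{C}$ is fully faithful as a framed bicategory and not merely a framed bicategory whose vertical arrows happen to be fully faithful in $C$ --- and it is what lets you avoid any cancellation property for fully faithful morphisms, which would indeed be false in general. The containment of $C^\ast$ via Observation \ref{obs:regularinvertible} is also handled correctly.
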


\begin{ex}
    The framed bicategories $\tilde{\Prof}$ and $\hat{\Prof}$ are fully faithful and absolutely dense sub-double categories of $\Prof$, both properly containing $\Prof^\ast$. Similarly $\tilde{\Span}$ and $\hat{\Span}$ are proper fully faithful and absolutely dense sub-framed bicategories of $\Span$, properly containing $\Span^\ast$. Similarly for $\Rel$.
\end{ex}

\noindent The following proposition says that the framed bicategories $\tilde{C}$ and $\hat{C}$ defined in Observation \ref{obs:fullyfaithful} satisfy a universal property with respect to strong framed functors, c.f. \cite[Definition 6.14]{ShulmanDerived}, expressing the fact that they are maximal among fully faithful/absolutely dense sub-framed bicategories of the framed bicategory $C$. 

\begin{prop}\label{prop:tildehatuniversal}
    Let $C,D$ be framed bicategories. Let $F:C\to D$ be a strong framed functor. Suppose $D$ is fully faithful. There exists a unique factorization of $F$ as
    \begin{center}
    \begin{tikzpicture}
\matrix(m)[matrix of math nodes, row sep=2em, column sep=2em,text height=1.5ex, text depth=0.25ex]
{D&&C\\
&\tilde{C}&\\};
\path[->,font=\scriptsize,>=angle 90]
(m-1-1) edge node [above]{$F$} (m-1-3)
        edge [dotted] node [left]{$\tilde{F}$} (m-2-2)
(m-2-2) edge node [right]{} (m-1-3)

;
\end{tikzpicture}
\end{center}
along a strong framed functor $\tilde{F}$, where the arrow on the right is the inclusion strict framed functor $\tilde{C}\to C$. In particular a sub-framed bicategory $D$ of a framed bicategory $C$ is fully faithful if and only if it is a sub-framed bicategory of $\tilde{C}$. Likewise, if we assume that $D$ is absolutely dense, then there exists a unique factorization of $F$ as
\begin{center}
    \begin{tikzpicture}
\matrix(m)[matrix of math nodes, row sep=2em, column sep=2em,text height=1.5ex, text depth=0.25ex]
{D&&C\\
&\hat{C}&\\};
\path[->,font=\scriptsize,>=angle 90]
(m-1-1) edge node [above]{$F$} (m-1-3)
        edge [dotted] node [left]{$\hat{F}$} (m-2-2)
(m-2-2) edge node [right]{} (m-1-3)

;
\end{tikzpicture}
\end{center}
\noindent where $\hat{F}$ is again a strong framed functor and the arrow on the right is the inclusion strict framed functor $\hat{C}\to C$, and a sub-framed bicategory $D$ of $C$ is absolutely dense if and only if $D$ is a sub-framed bicategory of $\hat{C}$.
\end{prop}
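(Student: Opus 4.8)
We prove the assertion in the form displayed in the factorization diagram, so that $F\colon D\to C$ is the strong framed functor and $D$ is the fully faithful one; the factorization to be produced is a strong framed functor $\tilde F\colon D\to\tilde C$ with $\iota\tilde F=F$, where $\iota\colon\tilde C\to C$ is the inclusion of Observation \ref{obs:fullyfaithful}. The plan is to show that $F$ automatically lands inside $\tilde C$ and then simply corestrict.

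First I would record the one genuinely structural input: a \emph{strong} framed functor preserves cartesian squares. This is exactly where the word ``strong'' is used, since a general (lax) double functor between framed bicategories has comparison cells that need not be invertible and hence need not carry cartesian squares to cartesian squares; strength says precisely that the relevant comparison cells are isomorphisms. Being a double functor, $F$ also carries unit squares to unit squares, so that $F(U_f)$ agrees with $U_{F_0 f}$ up to the unit comparison isomorphism of $F$; and cartesianness is invariant under composition with globular isomorphisms. Consequently, if $U_f$ is cartesian then $U_{F_0 f}$ is cartesian, i.e. $F$ preserves fully faithful vertical morphisms.

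Second, since $D$ is fully faithful, \emph{every} vertical morphism $f$ of $D$ has $U_f$ cartesian, so by the previous step $F_0 f$ is fully faithful in $C$. Therefore an arbitrary square $\varphi$ of $D$, whose frames $f,g$ are vertical morphisms of $D$, is sent to a square $F\varphi$ of $C$ whose frames $F_0 f,F_0 g$ are fully faithful; by the description of $\tilde C$ in Observation \ref{obs:fullyfaithful}, this means $F\varphi$ is a square of $\tilde C$, and likewise $F$ sends objects and vertical morphisms of $D$ into $\tilde C$. Thus $F$ factors through $\iota$, and I define $\tilde F$ to be $F$ with codomain restricted to $\tilde C$. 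That $\tilde F$ is again a strong framed functor is inherited: the framed structure of $\tilde C$ is the one it receives from $C$, so the compositor, unitor, and cartesian comparison cells of $F$ restrict to those of $\tilde F$. Uniqueness is immediate, because $\iota$ is injective on objects, on vertical morphisms, and on squares, so any $G$ with $\iota G=F$ must coincide with $\tilde F$. The absolutely dense case is identical after replacing ``cartesian'' by ``opcartesian'', $\tilde C$ by $\hat C$, and ``fully faithful'' by ``absolutely dense''.

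For the ``in particular'' equivalence, note that the inclusion of a sub-framed bicategory $D\hookrightarrow C$ is a strict, hence strong, framed functor. If $D$ is fully faithful, applying the factorization just established to this inclusion exhibits $D$ inside $\tilde C$. Conversely, if $D\subseteq\tilde C$, then every vertical morphism of $D$ is a vertical morphism of $\tilde C$ and hence fully faithful, and since $\tilde C$ is fully faithful (Observation \ref{obs:fullyfaithful}) this passes to $D$; the analogous equivalence for $\hat C$ follows by the same argument. The main obstacle is entirely contained in the first step: verifying that the ``strong'' hypothesis forces preservation of cartesian squares, equivalently of fully faithful vertical arrows. Once the image of $F$ is known to sit inside $\tilde C$, the remainder is a corestriction-and-uniqueness formality.
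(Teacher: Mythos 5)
Your proposal is correct and follows essentially the same route as the paper: the key step in both is that a strong framed functor preserves cartesian squares (the paper cites \cite[Proposition 6.4]{ShulmanFramed} for this, where you argue it directly from the invertibility of the comparison cells), so that $U_{F(f)}$ is cartesian for every vertical $f$ in $D$ and the image of $F$ lands in $\tilde{C}$, after which one corestricts. The uniqueness and the ``in particular'' equivalence are handled the same way.
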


\begin{proof}
    Let $F:D\to C$ be a strong framed functor. The square $U_{F(f)}$ is cartesian in $C$ for every vertical morphism $f$ in $D$. This follows from \cite[Proposition 6.4]{ShulmanFramed}, so that the square $F(U_f)$ is Cartesian in $C$, and thus $U_{F(f)}$ is cartesian. From this it follows the image $F(\varphi)$ of every square $\varphi$ in $D$ is a square in the fully faithful sub-framed bicategory $\tilde{C}$ of $C$. The first part of the statement of the proposition follows from this. An analogous argumen proves the second part.
\end{proof}
\begin{ex}\label{ex:Eq}
    The sub-double category $\Eq$ of $\Prof$ formed by squares having equivalences as left and right frames, and representable functors as horizontal arrows, is thus an example of a fully faithul sub-framed bicategory of $\Prof$ that is not absolutely dense.
\end{ex}

\noindent We can restate the universal properties stated in Proposition \ref{prop:tildehatuniversal} in terms of a pair of adjunctions. Write $\mathcal{F}r\mathcal{B}i$ for the 2-category of framed bicategories, strong framed functors, and framed double natural transformations of \cite[Proposition 6.17]{ShulmanFramed}. Write $\mathcal{F}r\mathcal{B}i^{ff}$ and $\mathcal{F}r\mathcal{B}i^{ad}$ for the full sub 2-categories of $\mathcal{F}r\mathcal{B}i$ generated by fully faithful framed bicategories and by absolutely dense framed bicategories respectively. By Proposition \ref{prop:tildehatuniversal} the constructions $C\mapsto \tilde{C}$ and $C\mapsto \hat{C}$ extend to strict 2-functors $\tilde{\_}$ and $\hat{\_}$ from $\mathcal{F}r\mathcal{B}i$ to $\mathcal{F}r\mathcal{B}i^{ff}$ and $\mathcal{F}r\mathcal{B}i^{ad}$ respectively. 

\begin{prop}\label{prop:adjunctions}
The 2-functor functor $\tilde{\_}$ fits into an adjuction
       \begin{center}
    \begin{tikzpicture}
\matrix(m)[matrix of math nodes, row sep=2em, column sep=2em,text height=1.5ex, text depth=0.25ex]
{\mathcal{F}r\mathcal{B}i&&\mathcal{F}r\mathcal{B}i^{ff}\\};
\path[->,font=\scriptsize,>=angle 90]
(m-1-1) edge [bend right=45] node [below]{$\tilde{\_}$} (m-1-3)
(m-1-3) edge [bend right=45] node [above]{$i$} (m-1-1)
(m-1-1) edge [white] node [black]{$\perp$} (m-1-3)

;
\end{tikzpicture}
\end{center}
\noindent where $i$ is the inclusion 2-functor of $\mathcal{F}r\mathcal{B}i^{ff}$ in $ \mathcal{F}r\mathcal{B}i$. Likewise, the 2-functor $\hat{\_}$ fits into an adjuction
   \begin{center}
    \begin{tikzpicture}
\matrix(m)[matrix of math nodes, row sep=2em, column sep=2em,text height=1.5ex, text depth=0.25ex]
{\mathcal{F}r\mathcal{B}i&&\mathcal{F}r\mathcal{B}i^{ad}\\};
\path[->,font=\scriptsize,>=angle 90]
(m-1-1) edge [bend right=45] node [below]{$\tilde{\_}$} (m-1-3)
(m-1-3) edge [bend right=45] node [above]{$j$} (m-1-1)
(m-1-1) edge [white] node [black]{$\perp$} (m-1-3)

;
\end{tikzpicture}
\end{center}
\noindent where now $j$ is the inclusion 2-functor of $\mathcal{F}r\mathcal{B}i^{ad}$ in $\mathcal{F}r\mathcal{B}i$.
\end{prop}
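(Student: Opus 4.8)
The plan is to read Proposition \ref{prop:tildehatuniversal} as a couniversal property and to promote it to a $2$-adjunction with the inclusion $i$ as left adjoint and $\tilde{\_}$ as (co)reflector. First I would take as counit of the putative adjunction $i\dashv\tilde{\_}$ the family of inclusion strict framed functors $\varepsilon_C\colon\tilde C\hookrightarrow C$, one for each framed bicategory $C$; each is strong, hence a $1$-cell of $\mathcal{F}r\mathcal{B}i$, and $\varepsilon$ is strictly $2$-natural in $C$ because $\tilde{\_}$ is a strict $2$-functor and the inclusions are compatible with the assignment $F\mapsto\tilde F$. The content of the adjunction is then the assertion that $\varepsilon_C$ is couniversal, i.e. that for every fully faithful framed bicategory $D$ the post-composition functor
\[
(\varepsilon_C)_\ast\colon \mathcal{F}r\mathcal{B}i^{ff}(D,\tilde C)\longrightarrow \mathcal{F}r\mathcal{B}i(iD,C)
\]
is an isomorphism of categories. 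Its bijectivity on objects is exactly Proposition \ref{prop:tildehatuniversal}: a strong framed functor $F\colon iD\to C$ out of a fully faithful $D$ sends every vertical arrow to a fully faithful vertical arrow and every square to a square with fully faithful frames, hence factors uniquely through $\tilde C$.

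The remaining, and main, step will be to check that $(\varepsilon_C)_\ast$ is also bijective on $2$-cells, i.e. that every framed double natural transformation between two such functors factors uniquely through $\tilde C$. Here the key structural fact I would exploit is that $\tilde C$ shares with $C$ its entire horizontal bicategory and all of its globular squares: since every identity vertical arrow is an isomorphism it is fully faithful (Observation \ref{obs:regularinvertible}), so every globular square of $C$ already has fully faithful frames and therefore lies in $\tilde C$, and $\tilde C$ contains all horizontal arrows of $C$. The task thus reduces to verifying that the component data of a framed transformation between two functors that land in $\tilde C$ itself consists of cells of $\tilde C$; given the shared horizontal bicategory and globular squares I expect this to follow from naturality together with strongness of the two functors, with uniqueness of the factorization immediate since $\varepsilon_C$ is injective on cells. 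Tracing the definition of framed double natural transformation of \cite[Proposition 6.17]{ShulmanFramed} and confirming that each piece of component data does land in $\tilde C$ is the one delicate point I anticipate, the objectwise statement being already in hand.

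Finally, I would assemble these isomorphisms of hom-categories, which are $2$-natural in $C$, into the desired $2$-adjunction $i\dashv\tilde{\_}$; the triangle identities hold because the coreflection is idempotent on the subcategory, so the unit at a fully faithful $D$ is the identity ($\widetilde{iD}=iD$). The second adjunction $j\dashv\hat{\_}$ is obtained by the dual argument: I would replace \emph{cartesian} by \emph{opcartesian} throughout, use the opcartesian counterpart of \cite[Proposition 6.4]{ShulmanFramed} to see that strong framed functors carry absolutely dense vertical arrows to absolutely dense ones, and invoke the second half of Proposition \ref{prop:tildehatuniversal}; the horizontal bicategory and globular squares are again shared between $\hat C$ and $C$, so the $2$-cell step is identical.
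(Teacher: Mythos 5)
The paper offers no proof of this proposition at all: it is presented as a direct restatement of the universal property in Proposition \ref{prop:tildehatuniversal}, so there is nothing to compare your argument against except that implicit one-line derivation. Your plan is the same derivation made explicit: take the inclusions $\tilde C\hookrightarrow C$ as counit, use Proposition \ref{prop:tildehatuniversal} for the bijection on objects of the hom-categories, and observe that the unit at a fully faithful $D$ is the identity since $\tilde D=D$. At the level of underlying $1$-categories (strong framed functors only) this is complete modulo routine naturality checks, and it is surely all the paper intends.

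The one place where your proposal goes beyond the paper is also where it has a genuine gap, which you flag but then wave away optimistically. For a $2$-adjunction you need $(\varepsilon_C)_\ast\colon\mathcal{F}r\mathcal{B}i^{ff}(D,\tilde C)\to\mathcal{F}r\mathcal{B}i(iD,C)$ to be bijective on $2$-cells, i.e.\ \emph{full} on framed transformations. A framed transformation $\alpha\colon iF\Rightarrow iG$ has as object-components vertical arrows $\alpha_a\colon Fa\to Ga$ of $C$, and nothing in the naturality axioms or in the strongness of $F$ and $G$ forces these to be fully faithful; since $\mathcal{F}r\mathcal{B}i^{ff}$ is the \emph{full} sub-$2$-category on the fully faithful objects, a $2$-cell of $\mathcal{F}r\mathcal{B}i^{ff}(D,\tilde C)$ must have components lying in $\tilde C$, hence fully faithful. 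Already for $D$ the terminal framed bicategory the two hom-categories have the same objects but different $2$-cells (arbitrary versus fully faithful vertical arrows of $C$), so the asserted isomorphism of hom-categories fails and your "I expect this to follow from naturality" step cannot be carried out. Your observation that $\tilde C$ and $C$ share all globular squares is correct but irrelevant here: the obstruction sits in the vertical-arrow components, not the square components. The honest conclusion is that the adjunction holds for the underlying $1$-categories (which is what Proposition \ref{prop:tildehatuniversal} delivers), and that promoting it to a strict $2$-adjunction requires either restricting the $2$-cells of $\mathcal{F}r\mathcal{B}i$ or weakening the claim; your proof as written does not establish the $2$-dimensional statement.
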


\section{Fully faithful framed bicategories are of length 1}\label{s:regframedidexngs}
\noindent In this final section we leverage Theorem \ref{thm:initial} and Corollary \ref{cor:mainthm} to describe the structure of fully faithful and absolutely dense framed bicategories. In particular we prove that both fully faithful and abolutely dense framed bicategories are of length 1. The main piece of structure we need to do this is provided by the following proposition.
\begin{prop}\label{lem:main}
 Let $C$ be a fully faithful framed bicategory. There exists a $\pi_2$-opindexing $\Phi$ on $H^\ast C$ such that $C$ induces $\Phi$. Likewise if $C$ is absolutely dense, then there exists a $\pi_2$-indexing on $H^\ast C$ such that $C$ induces $\Phi$. 
 \end{prop}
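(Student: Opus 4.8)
The plan is to define the opindexing $\Phi$ so that the operator $\Phi_f$ is exactly the operation of sliding a red-boundary square up the unit square $U_f$; the whole point is that full faithfulness makes every $U_f$ cartesian and hence equips it with a universal factorization property. Fix a vertical morphism $f\colon a\to b$ and a red-boundary square $\varphi\in\pi_2(C,b)$. Stacking $U_f$ on top of $\varphi$ produces a square whose left and right frames are both $f$, whose vertical codomain is $U_b$, and whose vertical domain is $U_a$. Since $f$ is fully faithful, $U_f$ is cartesian, so Observation \ref{obs:SquaresRegFramed}, applied with trivial $h=k=\id_a$, factors this composite uniquely as a red-boundary square $\Psi$ sitting on top of $U_f$; the boundary of $\Psi$ is forced to have identity frames and vertical domain and codomain equal to $f^{\ast}U_bf^{\ast}=U_a$, so $\Psi\in\pi_2(C,a)$. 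Setting $\Phi_f(\varphi):=\Psi$, the factorization equation is precisely the equation in the definition of ``$C$ induces $\Phi$''.

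I would then verify that $\Phi$ is a $\pi_2$-opindexing. Each $\Phi_f\colon\pi_2(C,b)\to\pi_2(C,a)$ is a monoid homomorphism: stacking $U_f$ on top of the product $\varphi\boxminus\psi$ and repeatedly applying the defining equation together with associativity rewrites it as $U_f$ beneath $\bigl(\Phi_f(\varphi)\boxminus\Phi_f(\psi)\bigr)$, whence uniqueness of the cartesian factorization gives $\Phi_f(\varphi\boxminus\psi)=\Phi_f(\varphi)\boxminus\Phi_f(\psi)$, and likewise $\Phi_f$ preserves the unit; by Eckmann--Hilton $\boxminus$ is the monoid operation on each $\pi_2$. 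For functoriality, $\Phi_{\id_a}=\id$ because $U_{\id_a}$ is the identity square, while $\Phi_{gf}=\Phi_f\circ\Phi_g$ follows from functoriality of the unit functor, which exhibits $U_{gf}$ as $U_f$ stacked on top of $U_g$: applying the defining equation to the bottom pair $U_g$-over-$\varphi$ yields $\Phi_g(\varphi)$ in the middle, then applying it to the top pair $U_f$-over-$\Phi_g(\varphi)$ exhibits the composite of $U_{gf}$ with $\varphi$ as $\Phi_f(\Phi_g(\varphi))$ stacked on $U_{gf}$, and uniqueness of factorization through the cartesian square $U_{gf}$ forces the claim. Hence $\Phi\colon C_0^{\mathrm{op}}\to\commMon$ with $\Phi(a)=\pi_2(HC,a)$ is a $\pi_2$-opindexing on $H^{\ast}C$ induced by $C$.

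The absolutely dense case is the formal dual. Now $U_f$ is opcartesian, so I slide red-boundary squares \emph{down}: a square $\varphi\in\pi_2(C,a)$ is stacked above $U_f$ and factored, via the opcartesian universal property, as $U_f$ above a red-boundary square $\Phi_f(\varphi)\in\pi_2(C,b)$, producing a covariant functor $\Phi\colon C_0\to\commMon$, that is, a $\pi_2$-indexing, again induced by $C$ by construction.

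I expect the only real work to be bookkeeping rather than difficulty: confirming at each stage that the boundaries of the composites match the niche of $U_f$, and tracking composition order in the functoriality step. Every existence-and-uniqueness assertion reduces to cartesianness (resp.\ opcartesianness) of the unit squares, and the identification $f^{\ast}U_bf^{\ast}=U_a$ that keeps the factors inside $\pi_2$ is nothing but the statement that $U_a$ is the vertical domain of $U_f$.
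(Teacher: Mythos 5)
Your proof is correct and takes essentially the same route as the paper: define $\Phi_f(\varphi)$ as the unique globular factor of the composite of $U_f$ with $\varphi$ through the cartesian unit square $U_f$ (Observation \ref{obs:SquaresRegFramed}), check the monoid homomorphism property by substituting factorizations and invoking uniqueness, and dualize for the absolutely dense case. The one small divergence is the functoriality step, where you derive $\Phi_{gf}=\Phi_f\circ\Phi_g$ directly from $U_{gf}=U_f\boxminus U_g$ and uniqueness of factorization through the cartesian square $U_{gf}$, while the paper instead appeals to its running assumption that $L\times R$ is split and normal; your version is, if anything, more self-contained.
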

 \begin{proof}
 We prove that every fully faithful framed bicategory induces a $\pi_2$-opindexing. Assume $C$ is fully faithful. Let $f:a\to b$ be a vertical morphism in $C$. Let $\varphi\in\pi_2(C,b)$. Write $\Phi_f(\varphi)$ for the unique square in $\pi_2(C,a)$ appearing in the unique factorization of 
 \begin{center}

\tikzset{every picture/.style={line width=0.75pt}} 



    \end{center}
The unique factorization of $\varphi'\boxminus\varphi$ along $U_f$ is thus the composite square appearing in the right hand side of the equality above. We conclude that $\Phi_f(\varphi\boxminus\varphi')=\Phi_f(\varphi')\boxminus\Phi_f(\varphi)$. It is easily seen that $\Phi_f(id_b)=id_a$ and thus $\Phi_f:\pi_2(C,b)\to\pi_2(C,a)$ is a monoid morphism for every $f:a\to b$. The fact that the assignmnet $f\mapsto \Phi_f$ is a functor follows from our running assumption that $L\times R$ is split and normal, see \ref{ss:framed}.
\end{proof}

\noindent For the rest of this section we will denote by $\Phi$ the $\pi_2$-opindexing induced by a fully faithful framed bicategory $C$ as in Proposition \ref{lem:main}. As a direct corollary of Proposition \ref{lem:main}, Theorem \ref{thm:initial} and Corollary \ref{cor:mainthm} we obtain the following theorem, which says that any fully faithful/absolutely dense is, in a sense, parametrized by a double category of the form $\crossb$. We will obtain in particular that every nonglobular square in a fully faithful/absoutely dense framed bicategory is canonical and thus fully faithful/absolutely dense framed bicategories are of length 1.
\begin{thm}\label{thm:mainfullyfaithful}
    Let $C$ be a framed bicategory. If $C$ is fully faithful or absolutely dense, then $\ell C=1$. Moreover, if $C$ is fully faithful/absolutely dense and $\Phi$ is the $\pi_2$-opindexing/$\pi_2$-indexing induced by $C$ as in Proposition \ref{lem:main}, then there exists a unique strict double functor
    \[!:H C\rtimes_\Phi C_0\to C\]
    such that $H^\ast !=id_{H^\ast C}$. Moreover, $!$ is full on globularly generated squares of $C$.
\end{thm}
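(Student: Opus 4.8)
The plan is to assemble the statement from the three preceding results, the only genuine work being to identify the data correctly. First I would invoke Proposition \ref{lem:main}: since $C$ is fully faithful, it produces a $\pi_2$-opindexing $\Phi$ on the decorated bicategory $H^\ast C=(C_0,HC)$, and by the very construction carried out in that proposition $C$ induces $\Phi$ in the sense of Definition \ref{def:categoryinducing}. Setting $\decB:=H^\ast C$, so that $B=HC$ and $B^\ast=C_0$, the double category $\crossb$ of Theorem \ref{thm:mainindexings} specializes to $HC\rtimes_\Phi C_0$, which is the domain appearing in the statement.

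Next I would observe that $C$ is itself an internalization of $H^\ast C$ (trivially, since $H^\ast C=H^\ast C$) which induces $\Phi$; hence $C$ is an object of the category $\overline{\dcat_\Phi}$. Theorem \ref{thm:initial} then applies verbatim: $HC\rtimes_\Phi C_0$ is initial in $\overline{\dcat_\Phi}$, so there is a unique morphism $!:HC\rtimes_\Phi C_0\to C$ in $\overline{\dcat_\Phi}$. Unwinding the definition of a morphism in $\overline{\dcat_\Phi}$, the map $!$ is precisely a strict double functor with $H^\ast !=id_{H^\ast C}$, and its uniqueness is exactly uniqueness among such functors; this gives the existence-and-uniqueness clause. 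The ``moreover'' part of Theorem \ref{thm:initial} yields directly that $!$ is full on the globularly generated squares of $C$, which is the final assertion.

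It then remains to deduce $\ell C=1$. The cleanest route is to cite Corollary \ref{cor:mainthm}, which states precisely that any double category inducing $\Phi$ has length $1$; since $C$ induces $\Phi$, we are done. Alternatively, and more in the spirit of this section, one argues directly from $!$: every square of $HC\rtimes_\Phi C_0$ is canonical by design, and since $!$ is full on globularly generated squares, every globularly generated square of $C$ is the image of a canonical square and is therefore itself canonical; \cite[Lemma 3.9]{OrendainMaldonado} then forces $\ell C=1$.

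Finally, I would dispatch the absolutely dense case by the same chain of reasoning with ``$\pi_2$-opindexing'' replaced by ``$\pi_2$-indexing'' throughout: Proposition \ref{lem:main} supplies the $\pi_2$-indexing $\Phi$ induced by $C$, and Theorem \ref{thm:initial} together with Corollary \ref{cor:mainthm}, both of which hold for $\pi_2$-indexings and $\pi_2$-opindexings alike, produce the unique $!:HC\rtimes_\Phi C_0\to C$ and the equality $\ell C=1$. I do not expect any genuine obstacle here, as the mathematical content is entirely contained in the cited results; the one point that demands care is the bookkeeping that $C\in\overline{\dcat_\Phi}$ and that the ambient decorated bicategory in Theorem \ref{thm:initial} is taken to be $H^\ast C$ rather than an abstract $\decB$, so that $\crossb$ becomes exactly $HC\rtimes_\Phi C_0$.
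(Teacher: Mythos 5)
Your proposal is correct and matches the paper's own argument: the paper states this theorem as a direct corollary of Proposition \ref{lem:main}, Theorem \ref{thm:initial}, and Corollary \ref{cor:mainthm}, which is precisely the assembly you carry out, including the identification $\decB=H^\ast C$ so that $\crossb$ becomes $HC\rtimes_\Phi C_0$ and the observation that $C$ itself lies in $\overline{\dcat}_\Phi$. The only cosmetic difference is that you spell out the bookkeeping (and offer the alternative direct route via canonicity of squares plus \cite[Lemma 3.9]{OrendainMaldonado}) that the paper leaves implicit.
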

\noindent We end this section with a few examples of $\pi_2$-opindexings induced by some of the fully faithful framed bicategories presented in Section \ref{s:fully faithful}.
\begin{ex}\label{ex:rho2}
    Let $X$ be a topological space. Let $\rho^\square_2(X)$ be homotopy double groupoid with connections of $X$, see \cite{BrownMosa}. $\rho^\square_2(X)$ is a fully faithful and absolutely dense framed bicategory. The category of objects of $\rho^\square_2(X)$ is the fundamental groupoid $\Pi_1(X)$ of $X$. Let $\Phi:\Pi_1(X)\to\commMon$ be the $\pi_2$-indexing associated to $\rho^\square_2(X)$. Given an object $x$ of $\rho^\square_2(X)$, the commutative monoid $\pi_2(\rho^\square_2(X),x)$ is the second fundamental group $\pi_2(X,x)$. Let $\gamma:x\to y$ be a homotopy equvalence of paths from $x$ to $y$ in $X$. The morphism $\Phi_\gamma:\pi_2(X,y)\to\pi_2(X,x)$ in Lemma \ref{lem:main} is the morphism $a\mapsto \gamma a\gamma^{-1}$, $a\in \pi_2(X,y)$. Similar computations for the $\pi_2$-opindexing associated to $\Bord,\Rel ^\ast,\Span ^\ast$, etc.
\end{ex}
\begin{ex}\label{ex:eq}
    We determine the $\pi_2$-opindexing associated to the fully faithful framed bicategory $\Eq$ appearing in Example \ref{ex:Eq}. Let $C$ be a category. The commutative monoid $\pi_2(\Eq,C)$ is the commutative monoid of natural endomorphisms $\mbox{\textbf{Nat}}(C)$ of $id_C$. The $\pi_2$-opindexing $\Phi$ in Lemma \ref{lem:main} thus associates to every category $C$, the monoid $\mbox{\textbf{Nat}}(C)$. Given an equivalence $F:C\to D$, the morphism $\Phi_F:\mbox{\textbf{Nat}}(D)\to\mbox{\textbf{Nat}}(C)$ associates to every $\nu:id_D\Rightarrow id_D$, the natural transformation provided by the pasting diagram
    \begin{center}
    \begin{tikzpicture}
\matrix(m)[matrix of math nodes, row sep=2em, column sep=2em,text height=1.5ex, text depth=0.25ex]
{C&D&D&C\\};
\path[->,font=\scriptsize,>=angle 90]
(m-1-1) edge node [below]{$F$} (m-1-2)
(m-1-1) edge [red,bend left=65]node []{} (m-1-4)
(m-1-1) edge [white,bend left=35]node [black]{$\Downarrow\eta^{-1}$} (m-1-4)
(m-1-1) edge [red,bend right=65]node []{} (m-1-4)
(m-1-1) edge [white,bend right=35]node [black]{$\Downarrow\eta$} (m-1-4)
(m-1-2) edge [red, bend right=45]node [below]{} (m-1-3)
(m-1-2) edge [red, bend left=45]node [below]{} (m-1-3)
(m-1-2) edge [white]node [black]{$\Downarrow\nu$} (m-1-3)
(m-1-3) edge node [below]{$F^{-1}$} (m-1-4)

;
\end{tikzpicture}
\end{center}
    where $F^{-1},\eta$ are part of an adjoint equivalence $(F,F^{-1},\eta,\epsilon)$.
\end{ex}

\noindent A natural question is when the double functor $!$ appearing in Theorems \ref{thm:initial} and \ref{thm:mainfullyfaithful} is a double isomorphism onto $\gamma C$, i.e. when $!$ is injective on squares, and when it is not. The following is an example of when it is not. The argument rests on Observation \ref{obs:squarescrosspsame}.

\begin{ex}\label{ex:nonisomorphism}
The double functor appearing in the statement of Theorems \ref{thm:initial} and $\ref{thm:mainfullyfaithful}$ is, in general, not a double isomorphism. To see this, consider the double category $\Span ^\ast$. By Observation \ref{obs:regularinvertible}, $\Span ^\ast$ is a fully faithful and absolutely dense framed bicategory. Consider diagrams of the form
\begin{center}
    \begin{tikzpicture}
\matrix(m)[matrix of math nodes, row sep=2em, column sep=2em,text height=1.5ex, text depth=0.25ex]
{X&X&X&&X&X&X\\
X&X&X&&X&X&X\\
X&X&X&&X&X&X\\
X&X&X&&X&X&X\\};
\path[->,font=\scriptsize,>=angle 90]
(m-1-2) edge node [above]{$\alpha$} (m-1-1)
        edge node [above]{$\alpha$} (m-1-3)
        edge node [left]{$\alpha$} (m-2-2)
(m-1-1) edge [red]node {}(m-2-1)
(m-1-3) edge [red]node {}(m-2-3)
(m-2-2) edge [red]node [above]{} (m-2-1)
        edge [red]node [left]{} (m-2-3)
        edge node [left]{$f$} (m-3-2)
(m-2-1) edge node [left]{$f$} (m-3-1)  
(m-2-3) edge node [right]{$f$} (m-3-3)  
(m-3-2) edge [red]node [below]{} (m-3-1)
        edge [red]node [below]{} (m-3-3)
        edge node [left]{$\beta$} (m-4-2)
(m-3-1) edge [red]node {} (m-4-1)
(m-3-3) edge [red]node {} (m-4-3)
(m-4-2) edge node [below]{$\gamma$} (m-4-1)  
        edge node [below]{$\gamma$} (m-4-3)

(m-1-6) edge node [above]{$\alpha$} (m-1-5)
        edge node [above]{$\alpha$} (m-1-7)
        edge node [left]{$\alpha$} (m-2-6)
(m-1-5) edge [red]node []{}(m-2-5)
(m-1-7) edge [red]node []{}(m-2-7)
(m-2-6) edge [red]node {} (m-2-5)
        edge [red]node [right]{} (m-2-7)
        edge node [left]{$f$} (m-3-6)
(m-2-5) edge node [left]{$f$} (m-3-5)  
(m-2-7) edge node [right]{$f$} (m-3-7)  
(m-3-6) edge [red]node {} (m-3-5)
        edge [red]node {} (m-3-7)
(m-3-5) edge[red] node[left]{} (m-4-5)
(m-3-6) edge node[left]{$\beta'$} (m-4-6)
(m-3-7) edge[red] node[left]{} (m-4-7)
(m-4-6) edge node[below]{$\gamma$} (m-4-5)
        edge node[below]{$\gamma$} (m-4-7)

;
\end{tikzpicture}
\end{center}
\noindent where $X$ is an infinite set, $f;X\to X$ is a bijection such that $f\neq id_X$, $\gamma:X\to X$ is a surjective function that is not bijective, $\beta,\beta'$ are right inverses of $\gamma$ such that $\beta\neq\beta'$,  and such that $\beta f\alpha=\beta' f\alpha$, then if we write $\overline{\alpha}$ for the top diagram in the above two squares, and $\underline{\beta},\underline{\beta'}$ for the bottom diagrams above, then the squares $\psi=(\underline{\beta}, f, \overline{\alpha})$ and $\psi'=(\underline{\beta'}, f, \overline{\alpha})$ are different in $H\Span ^\ast\rtimes_\Phi \Set^\ast$, but the images $!(\psi)$ and $!(\psi')$ under the double functor $!$ of Theorem \ref{thm:mainfullyfaithful} are both equal to the square

\begin{center}
    \begin{tikzpicture}
\matrix(m)[matrix of math nodes, row sep=2em, column sep=2em,text height=1.5ex, text depth=0.25ex]
{X&X&X\\
X&X&X\\};
\path[->,font=\scriptsize,>=angle 90]
(m-1-2) edge node [above]{$\alpha$} (m-1-1)
        edge node [above]{$\alpha$} (m-1-3)
        edge node [left]{$\eta$} (m-2-2)
(m-1-1) edge node [left]{$f$}(m-2-1)
(m-1-3) edge node [right]{$f$}(m-2-3)
(m-2-2) edge node [below]{$\gamma$} (m-2-1)
        edge node [below]{$\gamma$} (m-2-3)

;
\end{tikzpicture}
\end{center}
in $\Span ^\ast$, where $\beta f\alpha=\beta' f\alpha$. We conclude that in this case $!$ is not bijective on squares.
    
\end{ex}

\noindent The following Observation provides conditions for the double functor $q$ to be a bijection.
\begin{obs}
    Let $C$ be a fully faithful framed bicategory. If $C$ is a double groupoid, then the double functor $!$ appearing in Theorem \ref{thm:mainfullyfaithful} is a double isomorphism. This follows directly from the fact that for nonglobular squares of the form
    \begin{center}
        \begin{center}

\tikzset{every picture/.style={line width=0.75pt}} 

\begin{tikzpicture}[x=0.75pt,y=0.75pt,yscale=-1,xscale=1]

\draw [color={rgb, 255:red, 0; green, 0; blue, 0 }  ,draw opacity=1 ]   (300,69.4) -- (350,69.4) ;
\draw [color={rgb, 255:red, 208; green, 2; blue, 27 }  ,draw opacity=1 ]   (300,119.4) -- (350,119.4) ;
\draw [color={rgb, 255:red, 208; green, 2; blue, 27 }  ,draw opacity=1 ]   (300,69.4) -- (300,119.4) ;
\draw [color={rgb, 255:red, 208; green, 2; blue, 27 }  ,draw opacity=1 ]   (350,69.4) -- (350,119.4) ;
\draw [color={rgb, 255:red, 0; green, 0; blue, 0 }  ,draw opacity=1 ]   (300,119.4) -- (300,169.4) ;
\draw [color={rgb, 255:red, 0; green, 0; blue, 0 }  ,draw opacity=1 ]   (350,119.4) -- (350,169.4) ;
\draw [color={rgb, 255:red, 208; green, 2; blue, 27 }  ,draw opacity=1 ]   (300,169.4) -- (350,169.4) ;
\draw [color={rgb, 255:red, 0; green, 0; blue, 0 }  ,draw opacity=1 ]   (300,219) -- (350,219) ;
\draw [color={rgb, 255:red, 208; green, 2; blue, 27 }  ,draw opacity=1 ]   (300,169) -- (300,219) ;
\draw [color={rgb, 255:red, 208; green, 2; blue, 27 }  ,draw opacity=1 ]   (350,169) -- (350,219) ;

\draw (352.36,135.84) node [anchor=north west][inner sep=0.75pt]  [font=\scriptsize]  {$f$};
\draw (289.36,134.84) node [anchor=north west][inner sep=0.75pt]  [font=\scriptsize]  {$f$};
\draw (318.36,185.44) node [anchor=north west][inner sep=0.75pt]  [font=\scriptsize]  {$\varphi _{\downarrow }$};
\draw (318.21,137.5) node [anchor=north west][inner sep=0.75pt]  [font=\scriptsize]  {$U$};
\draw (317.96,84.64) node [anchor=north west][inner sep=0.75pt]  [font=\scriptsize]  {$\varphi _{\uparrow }$};

\end{tikzpicture}

\end{center}
    \end{center}
    in a double groupoid $C$ admit a unique such decomposition.
\end{obs}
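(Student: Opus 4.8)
The plan is to promote Theorem~\ref{thm:mainfullyfaithful} by proving that its double functor $!$ is moreover \emph{injective} on squares: combined with the fullness of $!$ on globularly generated squares (Theorem~\ref{thm:mainfullyfaithful}) and with $H^\ast!=\id_{H^\ast C}$, injectivity makes $!$ a bijective strict double functor onto $\gamma C$, hence a double isomorphism. Because $!$ is the identity on globular squares and sends a nonglobular canonical square $(\varphi_\downarrow,f,\varphi_\uparrow)$, with $f$ nontrivial, to a square of $C$ whose frame is the nontrivial arrow $f$, no globular and nonglobular squares are ever identified; the problem thus reduces to injectivity on nonglobular squares, that is, to showing that two canonical presentations with the same $!$-image are related by the relation of Observation~\ref{obs:squarescrosspsame}. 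Throughout I write vertical composites so that in $X\boxminus Y$ the square $Y$ sits on top of $X$, matching the canonical form $\varphi_\downarrow\boxminus U_f\boxminus\varphi_\uparrow$.

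Accordingly, suppose $(\varphi_\downarrow,f,\varphi_\uparrow)$ and $(\psi_\downarrow,f,\psi_\uparrow)$ are nonglobular canonical squares with a common $!$-image $\chi$ in $C$. They share their frame $f$, vertical domain $\alpha$ and vertical codomain $\beta$, so $\varphi_\uparrow,\psi_\uparrow\colon\alpha\Rightarrow 1_a$ and $\varphi_\downarrow,\psi_\downarrow\colon 1_b\Rightarrow\beta$ are globular. Since $C$ is a double groupoid, $\varphi_\uparrow$ has a globular vertical inverse $\varphi_\uparrow^{-1}$, and I would put $\mu:=\psi_\uparrow\boxminus\varphi_\uparrow^{-1}\in\pi_2(C,a)$, so that $\psi_\uparrow=\mu\boxminus\varphi_\uparrow$. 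The vertical arrow $f$ is invertible in the double groupoid $C$, so the monoid morphism $\Phi_f$ of Proposition~\ref{lem:main} is invertible (it is the value of the functor $\Phi$ on an isomorphism); defining $\nu:=\Phi_f^{-1}(\mu)\in\pi_2(C,b)$ gives $\Phi_f(\nu)=\mu$ and hence the first relation of Observation~\ref{obs:squarescrosspsame}, namely $\psi_\uparrow=\Phi_f(\nu)\boxminus\varphi_\uparrow$.

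It then remains to verify that this same $\nu$ realizes the second relation $\varphi_\downarrow=\psi_\downarrow\boxminus\nu$. Substituting $\psi_\uparrow=\Phi_f(\nu)\boxminus\varphi_\uparrow$ into $\chi=\psi_\downarrow\boxminus U_f\boxminus\psi_\uparrow$ and rewriting the block $U_f\boxminus\Phi_f(\nu)$ as $\nu\boxminus U_f$ by the sliding identity defining the $\pi_2$-opindexing induced by $C$ (the displayed equation in the definition of ``$C$ induces $\Phi$''), the composite $\chi$ becomes $\psi_\downarrow\boxminus\nu\boxminus U_f\boxminus\varphi_\uparrow$. This must agree with the other presentation $\varphi_\downarrow\boxminus U_f\boxminus\varphi_\uparrow$, and cancelling the common top block $U_f\boxminus\varphi_\uparrow$ — vertically invertible because $C$ is a double groupoid — leaves $\psi_\downarrow\boxminus\nu=\varphi_\downarrow$. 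A single $\nu$ now satisfies both relations, so Observation~\ref{obs:squarescrosspsame} identifies the two triples in $HC\rtimes_\Phi C_0$, which is injectivity.

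I expect the cancellation of $U_f\boxminus\varphi_\uparrow$ to be the crux, and it is exactly where the full double-groupoid hypothesis — not merely that $C_0$ is a groupoid — is used. This is confirmed by Example~\ref{ex:nonisomorphism}: in $\Span^\ast$ the vertical arrows are already bijections, so $\Phi_f$ is invertible and the first relation can always be produced, yet the globular squares are not vertically invertible, the block $U_f\boxminus\varphi_\uparrow$ cannot be cancelled, and injectivity genuinely fails. Assembling injectivity with the fullness of $!$ on globularly generated squares and with $H^\ast!=\id_{H^\ast C}$ then exhibits $!$ as a bijective strict double functor onto $\gamma C$, hence the claimed double isomorphism.
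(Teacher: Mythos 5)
Your proof is correct, and it is worth comparing with the paper's own justification, which consists of a single sentence asserting that in a double groupoid a nonglobular square admits a \emph{unique} decomposition $\varphi_\downarrow\boxminus U_f\boxminus\varphi_\uparrow$ in canonical form. Read literally, that assertion is stronger than what is true: whenever some monoid $\pi_2(C,b)$ is nontrivial --- for instance $C=\rho_2^\square(X)$ of Example~\ref{ex:rho2} with $\pi_2(X,x)\neq 0$, which is a double groupoid --- the triples $(\psi_\downarrow\boxminus\nu,\,f,\,\varphi_\uparrow)$ and $(\psi_\downarrow,\,f,\,\Phi_f(\nu)\boxminus\varphi_\uparrow)$ with $\nu\neq\mathrm{id}$ are distinct canonical presentations of one and the same square of $C$, precisely because of the sliding identity. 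What injectivity of $!$ actually requires, since equality of squares in $HC\rtimes_\Phi C_0$ is exactly the relation of Observation~\ref{obs:squarescrosspsame}, is uniqueness of the decomposition \emph{up to that relation}, and that is what you prove. Your argument uses the double groupoid hypothesis twice, in the right places: invertibility of the vertical arrow $f$ makes $\Phi_f$ an isomorphism of monoids (by functoriality of the opindexing of Proposition~\ref{lem:main}), which produces the witness $\nu=\Phi_f^{-1}(\psi_\uparrow\boxminus\varphi_\uparrow^{-1})$ realizing the first relation; and vertical invertibility of squares lets you cancel the common block $U_f\boxminus\varphi_\uparrow$ after applying the sliding identity, yielding the second relation $\varphi_\downarrow=\psi_\downarrow\boxminus\nu$. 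Your reading of Example~\ref{ex:nonisomorphism} is also the correct diagnosis: in $\Span^\ast$ the frames are bijections, so the first half of your argument goes through, and it is the failure of vertical cancellation of globular squares that kills injectivity there. In short, your proof fills in, and slightly repairs, the paper's one-line argument, while the conclusion of the Observation is unaffected.
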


\bibliographystyle{alpha}
\bibliography{biblio}

\begin{thebibliography}{OMH24}

\bibitem[BDH15]{Bartels1}
A.~Bartels, C.L. Douglas, and A.~Henriques.
\newblock Conformal nets i: Coordinate free nets.
\newblock {\em Int. Math. Res. Not}, 13:4975--5052, 2015.

\bibitem[BHS11]{BrownBook}
R.~Brown, P.J. Higgins, and R.~Sivera.
\newblock {\em Nonabelian Algebraic Topology}.
\newblock EMS Tracts in Mathematics, 2011.

\bibitem[BM99]{BrownMosa}
R.~Brown and G.~H. Mosa.
\newblock Double categories, 2-categories, thin structures and connections.
\newblock {\em Theory Appl. Categ.}, 5(7):163--1757, 1999.

\bibitem[CS10]{ShulmanCruttwell}
G.S.H. Cruttwell and M.~A. Shulman.
\newblock A unified framework for generalized multicategories.
\newblock {\em Theory Appl. Categ.}, 24(21):580--655, 2010.

\bibitem[Kou20]{Roald}
E.~R. Koudenburg.
\newblock Augmented virtual double categories.
\newblock {\em Theory Appl. Categ.}, 35(10):261--325, 2020.

\bibitem[KS74]{KellyStreet}
M.~Kelly and R.~Street.
\newblock Review of the elements of 2-categories.
\newblock {\em Lecture notes in Mathematics}, 420, 1974.

\bibitem[OMH24]{OrendainMaldonado}
J.~Orendain and R.~Maldonado-Herrera.
\newblock Internalizations of decorated bicategories via $\pi_2$-indexings.
\newblock {\em To appear in Applied Categorical Structures}, 2024.

\bibitem[Ore19a]{Orendain3}
J.~Orendain.
\newblock Free globularly generated double categories ii: The canonical double projection.
\newblock {\em Theory and Applications of Categories}, 34(42):1343--1385, 2019.

\bibitem[Ore19b]{Orendain1}
J.~Orendain.
\newblock Lifting bicategories into double categories: The globularily generated condition.
\newblock {\em Theory and Applications of Categories}, 34:80--108, 2019.

\bibitem[Ore21]{Orendain2}
J.~Orendain.
\newblock Free globularly generated double categories.
\newblock {\em Cahiers de topologie et géometrie différentielle catégoriques}, LXII Issue 3:243--302, 2021.

\bibitem[Shu08]{ShulmanFramed}
M.~A. Shulman.
\newblock Framed bicategories and monoidal fibrations.
\newblock {\em Theory and Applications Categories}, 18:650--738, 2008.

\bibitem[Shu11]{ShulmanDerived}
M.~A. Shulman.
\newblock Comparing composites of left and right derived functors.
\newblock {\em New York Journal of Mathematics}, 17:75--125, 2011.

\bibitem[Woo82]{Wood1}
R.~J. Wood.
\newblock Abstract proarrows i.
\newblock {\em Cahiers de topologie et géométrie différentielle}, 23(3):279--290, 1982.

\bibitem[Woo85]{Wood2}
R.~J. Wood.
\newblock Proarrows ii.
\newblock {\em Cahiers de topologie et géométrie différentielle}, 26(2):135--168, 1985.

\end{thebibliography}
\end{document}